\theoremstyle{definition}
\newtheorem{Theorem}[equation]{Theorem}
\newtheorem{Corollary}[equation]{Corollary}
\newtheorem{Lemma}[equation]{Lemma}
\newtheorem{Proposition}[equation]{Proposition}
\newtheorem{Remark}[equation]{Remark}
\newtheorem{Definition}[equation]{Definition}
\newtheorem{Definition-Remark}[equation]{Definition/Remark}
\newtheorem{Example}[equation]{Example}
\newtheorem{Notation}[equation]{Notation}
\numberwithin{equation}{section}
\numberwithin{figure}{section}
\newcommand{\C}{\mathbb{C}}
\newcommand{\Z}{\mathbb{Z}}
\newcommand{\Q}{\mathbb{Q}}
\newcommand{\PP}{\mathbb{P}}
\newcommand{\MC}{\mathit{M}\kern -0.1em\mathrm{1}}
\newcommand{\MCC}{\mathit{M}\kern -0.1em\mathrm{2}}
\newcommand{\mc}[1]{\mathcal{#1}} 
\newcommand{\mbf}[1]{\mathbf{#1}} 
\newcommand{\mt}[1]{\text{#1}}
\begin{document}

\title{Sphericality and Smoothness of Schubert Varieties}

\author[1]{Mahir Bilen Can}
\author[2]{Reuven Hodges}

\affil[1]{{\small Tulane University, New Orleans; mahirbilencan@gmail.com}}    
\affil[2]{{\small University of Illinois Urbana-Champaign; rhodges@illinois.edu}}

\normalsize

\date{\today}
\maketitle

\begin{abstract}
We consider the action of the Levi subgroup of a parabolic subgroup that stabilizes a Schubert variety. 
We show that a smooth Schubert variety is a homogeneous space for a parabolic subgroup, 
or it has a smooth Schubert divisor. Further, we show that all smooth Schubert varieties in a (partial) flag variety of a rank two
simple algebraic group are spherical. 
\vspace{.2cm}

\noindent 
\textbf{Keywords:}  Spherical varieties, Schubert varieties, BP-decomposition, smooth divisors.\\ 
\noindent 
\textbf{MSC: 14M15, 14M27} 
\end{abstract}

\section{Introduction}

Let $Y$ be a complex algebraic variety, and let $G$ be a connected complex reductive algebraic group with an algebraic action $m: G\times Y \to Y$.
If $Y$ is a normal variety, and if a Borel subgroup $B\subseteq G$ has a dense, open orbit in $Y$, then $Y$ is called a {\em spherical $G$-variety}. 
All partial flag varieties, all normal reductive monoids, all homogeneous symmetric spaces and their equivariant compactifications are spherical varieties. 
In this article, we are concerned with the question of which Schubert varieties in $G/P$, where $P$ is a parabolic subgroup, are spherical with respect to an action of a reductive subgroup of $G$.

Given a Schubert variety $Y$ in a flag variety $G/Q$, where $Q$ is a parabolic subgroup, we have a natural choice for a reductive group action. The parabolic subgroups of $G$ that act on $Y$ by left multiplication are well studied~\cite{LMS74}, and can be computed explicitly in terms of root system data. Taking any Levi subgroup of these parabolic subgroups then gives us a natural reductive group action on $Y$.

The main purpose of this article is two-fold. 
First, it is to show that if a smooth Schubert variety is not spherical, then it has a smooth Schubert divisor. 

\begin{Theorem}\label{T:maintheorem}
Let $X_{wB}$ be a smooth Schubert variety in $G/B$ and let $L(w)$ denote the standard Levi subgroup of $P(w)=Stab_G(X_{wB})$. Then $w\in W$ is a maximal element of a parabolic subgroup,
hence $X_{wB}$ is a homogeneous spherical $L(w)$-variety, or $X_{wB}$ has a smooth Schubert divisor. 
\end{Theorem}

This result shows an interesting dichotomy between smoothness and the sphericality properties of Schubert varieties. 
Secondly, we show that every smooth Schubert variety in a (partial) flag variety of a rank two simple algebraic group is spherical. 
Indeed, we identify all the spherical Schubert varieties in types $\mathrm{A}_2$, $\mathrm{A}_3$, $\mathrm{B}_2$, and $\mathrm{G}_2$. 
Along the way we develop a number of methods which allow the determination of the sphericality of a Schubert variety in terms of its divisors. 
The lesson that we learn from working out these small rank examples is that the smooth Schubert varieties of low dimensions are close to being a toric variety,
and therefore are spherical.
In Example~\ref{E:smoothspericalcounterexample}, we show that there are smooth Schubert varieties which are not spherical 
when the rank of the group is bigger than two.

In the case when $G$ is $\mathbf{GL}_n$ and $Q$ is a maximal parabolic subgroup, that is when $G/Q$ is the Grassmann variety, Levi actions have been studied by the second author and Lakshmibai~\cite{HodgesLakshmibai}. This has culminated in a complete characterization of those pairs $(Y,L)$ of Schubert variety $Y$ in the Grassmannian and Levi subgroup $L$ for which $Y$ is a spherical $L$-variety~\cite{HodgesLakshmibaiCL}. 

It is shown by Hong and Mok~\cite[Section 3.3]{HongMok} that if $P$ is a maximal parabolic subgroup associated with a long simple root for $G$,
then any smooth Schubert variety in $G/P$ is a homogeneous submanifold of $G/P$ associated to a subdiagram of the marked Dynkin diagram of $G$. 
Since all projective homogeneous spaces (partial flag varieties) are spherical, this result provides us with a large class of smooth Schubert varieties which are spherical.
However, this property does not hold in general. 
Indeed, first of all, smooth Schubert varieties are in general not isomorphic to partial flag varieties.
Moreover, as noted above, not every smooth Schubert variety is spherical with respect to a maximal Levi subgroup action.

One approach to understanding the sphericity of smooth Schubert varieties would be to extend the representation theoretic and combinatorial techniques applied by the second author and Lakshmibai in~\cite{HodgesLakshmibai}, but this would require a case by case approach for each Dynkin type. Even for $G=\mathbf{GL}_n$ and $Q=B$ this becomes a non-trivial problem. 
At this junction, let us mention that there are well-known criteria for detecting smoothness of the Schubert 
varieties, see~\cite{BilleyPostnikov, Billey, Gasharov, LakshmibaiSandhya, OhYoo, RichmondSlofstra, Slofstra15}.
Let us also mention that within the class of spherical varieties are the ``toroidal varieties''
and in our previous work, joint with Lakshmibai, we showed that toroidal Schubert varieties are 
smooth~\cite{Can2019}.

The structure of our paper is as follows.
In Section~\ref{S:Preliminaries}, we set our notation and review some basic facts about spherical varieties. 
The purpose of Section~\ref{S:InversionSets} is to introduce a combinatorial tool for analyzing 
the stabilizer subgroups of Schubert varieties, namely, inversion sets. 
In Section~\ref{S:Review}, we briefly discuss the Billey-Postnikov decompositions, and 
we review some basic facts regarding homogeneous fiber bundles.
In Section~\ref{S:Divisors}, we prove that if $X_{wB}$ is a smooth Schubert variety in $G/B$,
then either $w$ is the maximal element of a parabolic subgroup, or $X_{wB}$ contains 
a smooth Schubert divisor.

In Section~\ref{S:FinalA3} we show that all Schubert varieties in types $\text{A}_2$ and $\text{A}_3$ are spherical.
In Section~\ref{S:FinalB2}, we show that every Schubert variety in type $\text{B}_2$ is spherical.
Finally, in Section~\ref{S:FinalG2}, we show that every Schubert variety in type $\text{G}_2$ is spherical.

\vspace{.5cm}

\textbf{Acknowledgements.} The second author was partially supported by a grant from Louisiana Board of Regents.

\section{Preliminaries and Notation}\label{S:Preliminaries}

Throughout our paper we work with algebraic varieties and
groups that are defined over $\C$.
By an algebraic variety we mean an 
irreducible separated scheme of finite type. 
Our implicitly assumed symbolism is as follows:

\begin{longtable}{l c l}
$G$ &: & \text{ a connected reductive group;}\\
$T$ &: & \text{ a maximal torus in $G$;}\\
$X^*(T)$ &: & \text{ the group of characters of $T$;}\\
$B$ &: & \text{ a Borel subgroup of $G$ s.t. $T\subset B$;}\\
$U$ &: & \text{ the unipotent radical of $B$ (so, $B=T\ltimes U$);}\\
$\Phi$ &: & \text{ the root system determined by $(G,T)$;}\\
$\Delta$ &: & \text{ the set of simple roots in $\Phi$ relative to $B$;}\\
$W$ &: & \text{ the Weyl group of the pair $(G,T)$ (so, $W=N_G(T)/T$);}\\
$S$ &: & \text{ the Coxeter generators of $W$ determined by $(\Phi,\Delta)$;}\\
$S(v)$ &: & \text{ the support of $v\in W$, that is, $S(v) := \{ s\in S:\ s \leq v\}$;}\\ 
$R$ &: & \text{ the set consisting of $w s w^{-1}$ with $s\in S$, $w\in W$;}\\
$C_w$ &: & \text{ the $B$-orbit through $wB/B$ ($w\in W$) in $G/B$;}\\
$\ell$ &: & \text{ the length function defined by $w\mapsto \dim C_w$ ($w\in W$);}\\
$X_{w}$ &: & \text{ the Zariski closure of $C_w$ ($w\in W$) in $G/B$;}\\
$p_{{\scriptscriptstyle P,Q}}$ &: & \text{ the canonical projection 
$p_{{\scriptscriptstyle P,Q}}:G/P\rightarrow G/Q$
if $P\subset Q \subset G$;}\\
$W_Q$ &: & \text{ the parabolic subgroup of $W$ corresponding
to $Q$, where $B\subset Q$;}\\
$W^Q$ &: & \text{ the minimal length coset representatives
of $W_Q$ in $W$;}\\
$X_{wQ}$ &: & \text{ the image of $X_w$ in $G/Q$ under $p_{{\scriptscriptstyle B,Q}}$.}\\
\end{longtable}

Let $w$ be an element from $W$. 
The {\em right (resp. left) descent set} of $w$ is defined by $D_R(w) = \{s\in S:\ \ell(ws) < \ell(w)\}$ (resp. $D_L(w) = \{s\in S:\ \ell(sw) < \ell(w) \}$).

The {\em right (resp. left) weak order} on $W$ is defined by $u\leq_R w$ (resp. $u\leq_L w$) if there exist 
$s_1,\dots, s_k\in S$ such that $w= u s_1\cdots s_k$  (resp. $w= s_1\cdots s_k u$) and 
$\ell(u s_1\cdots s_i) = \ell(u) + i$ for $1\leq i \leq k$ (resp. $\ell(s_1\cdots s_i u) = \ell(u) + i$ for $1\leq i \leq k$). We will write $x \lessdot_R y$ to indicate that $x$ is \emph{covered} by $y$, that is $x <_R y$ and there is no $z$ such that $x <_R z <_R y$.

\vspace{.25cm}
The Weyl group $W$ operates on $X^*(T)$ and the action leaves $\Phi$ stable.
In particular, $\Phi$ spans a not necessarily proper euclidean subspace $V$ of $X^*(T)\otimes_\Q \Z$;
$\Delta$ is a basis for $V$. The elements of $S$ are 
called simple reflections and the elements of $R$ are 
called reflections. The elements of $S$ are in one-to-one correspondence
with the elements of $\Delta$, and furthermore, $S$ generates $W$.

\vspace{.25cm}

When there is no danger for confusion,
for the easing of our notation, we will use the notation $w$ 
to denote a coset in $N_G(T)/T$ as well as to denote 
its representative $n_w$ in the normalizer subgroup $N_G(T)$ in $G$.
The {\em Bruhat-Chevalley decomposition} of $G$ is
$$G= \bigsqcup_{w\in W} B w B$$ and the 
Bruhat-Chevalley order on $W$ is defined by 
$w \leq v \iff B w B \subseteq \overline{B v B}$.
Here, the bar indicates Zariski closure in $G$.
Then $(W,\leq)$ is a graded poset with grading given by $\ell$. 
The order $\leq$ induces an order on $W^Q$, denoted by $\leq_Q$:
\[
wW^Q \leq_Q v W^Q\ \text{ in $W^Q$} \iff w' \leq v' \text{ in $W$},
\] 
where $w'$ (resp. $v'$) is the minimal length left coset representative of $w$ (resp. of $v$).
We will call $\leq_Q$ the {\em Bruhat-Chevalley order on $W^Q$}, and if confusion is unlikely, 
then we will denote $\leq_Q$ simply by $\leq$.

\vspace{.25cm}

Let $Q$ be a parabolic subgroup of $G$ with $B\subset Q$. 
Then $Q$ is called a \emph{standard parabolic subgroup} 
with respect to $B$.
The canonical projection $p_{B,Q}$ restricts to an isomorphism
$p_{B,Q} \vert_{C_w} : C_w \rightarrow BwQ/Q$,
hence it restricts to a birational morphism 
$p_{B,Q} \vert_{X_w} : X_w \rightarrow X_{wQ}$ for $w \in W^Q$.
In this case, that is $w\in W^Q$, the preimage in $G/B$ of $X_{wQ}$ is equal 
to $X_{ w w_{0,Q}}$, where 
$w_{0,Q}$ denotes the unique maximal element of $(W_Q,\leq)$, 
and the restriction $p_{B,Q} \vert_{X_{ w w_{0,Q}}} : X_{ w w_{0,Q}} \rightarrow X_{wQ}$
is a locally trivial fibration with generic fiber $Q/B$.

The standard parabolic subgroups with respect to $B$ 
are determined by the subsets of $S$; 
let $I$ be a subset in $S$ and define $P_I$ by 
\begin{align}\label{A:standard parabolic}
P_I := B W_I B,
\end{align}
where $W_I$ is the subgroup of $W$ generated by the elements in $I$. 
Then $P_I$ is a standard parabolic subgroup with respect to $B$. 
Any parabolic subgroup in $G$ is 
conjugate-isomorphic to exactly one such $P_I$. 
In the next paragraphs we will briefly 
review the structure of $P_I$.

Let $I$ be as in (\ref{A:standard parabolic}). 
By abusing the notation, we denote the corresponding subset in $\Delta$ by $I$ as well. 
Denote by $\Phi_I$ the subroot system in $\Phi$ that is generated by $I$. 
The intersection $\Phi^+ \cap \Phi_I$, which we denote by $\Phi_I^+$, 
forms a system of positive roots for $\Phi_I$. 
In a similar way we denote $\Phi^-\cap \Phi_I$ by $\Phi_I^-$. 
Then $\Phi_I = \Phi_I^+ \cup \Phi_I^-$. In this notation, we have 
\begin{align*}
W^I = \{ x\in W :\ \ell(xw) = \ell(x) + \ell(w) \ \text{ for all } w\in W_I \}= \{ x\in W :\  x(\Phi_I^+)\subset \Phi^+\}.
\end{align*}
In other words, $W^I$ is the set of 
minimal length right coset representatives of $W_I$ in $W$.

For a simple root $\alpha$ from $\Delta$,
we denote by $U_\alpha$ the corresponding one-dimensional unipotent subgroup.
Let $L_I$ and $U_I$ be the subgroups defined by 
$L_I := \langle T, U_\alpha :\ \alpha \in \Phi_I \rangle$ and 
$U_I := \langle U_\alpha:\ \alpha \in \Phi^+ \setminus \Phi_I^+ \rangle$.
Then $L_I$ is a reductive group and $U_I$ is a unipotent group. 
The Weyl group of $L_I$ is equal to $W_I$. 
The relationship between $L_I,U_I$, and $P_I$ is given by 
$P_I = L_I \ltimes U_I$, where $U_I$ is the unipotent radical of $P_I$. 
We will refer to $L_I$ as the standard Levi factor of $P_I$. 
In the most extreme case that $I=\emptyset$,
so $P_I=B$, the Levi factor is the maximal torus $T$. 
In general, Levi subgroups are defined as follows. 
Let $P$ be a parabolic subgroup and let 
$\mc{R}_u(P)$ denote its unipotent radical.
A subgroup $L\subset P$ is called a Levi subgroup if 
$P= L \ltimes \mc{R}_u(P)$ holds true.

We apply the following notational convention throughout our paper:
given a parabolic subgroup $P$
with Levi factor $L$, the Weyl group of $L$ is denoted by $W_P$. 
In the same way, the set of minimal length coset representatives 
of $W_P$ in $W$ will be denoted $W^P$.

Clearly, the minimal parabolic subgroups containing $B$ are in one-to-one correspondence with 
the set of simple roots. 
The following observation, which is due to Kempf~\cite{Kempf1976}, will be useful in the sequel.
\begin{Lemma}\label{L:Kempf}
Let $Q$ be a minimal parabolic subgroup corresponding to a simple root $\alpha$,
let $s_\alpha$ denote the simple Coxeter generator corresponding to $\alpha$. 
Given a Schubert variety $X=X_{wB}$ the following assertions hold: 
\begin{enumerate}
\item $X$ is a $\PP^1$-bundle over its image $p_{B,Q}(X)$ if and only if 
$$
\ell( w \cdot s_\alpha) = \ell(w) - 1.
$$ 
\item $X$ is mapped birationally onto its image $p_{B,Q}(X)$ if and only if 
$$
\ell( w \cdot s_\alpha) = \ell(w) + 1.
$$ 
\item In any case, the fiber of the restriction of $p_{B,Q}$ onto $X$ is either a point or a  
projective line. 
\end{enumerate}
\end{Lemma}

\subsection{Background on spherical varieties.}

Let $V$ be a $G$-module. 
The vector space $V$ 
is called {\em multiplicity-free} if for any dominant weight $\lambda \in X^*(T)$
the following inequality holds true:
$$
\dim \mt{Hom}_G (V(\lambda), V) \leq 1,
$$ 
where $V(\lambda)$ is the irreducible representation of $G$ with highest weight
$\lambda$. 
A proof of the following result on the characterization of spherical varieties can be found in~\cite{Perrin}.
\begin{Theorem}\label{T:Perrin}
Let $X$ be a normal $G$-variety, and let $B$ denote a Borel subgroup of $G$. 
The following conditions are equivalent:
\begin{enumerate}
\item all $B$-invariant rational functions on $X$ are constant functions;
\item the minimal codimension of a $B$-orbit in $X$ is zero; 
\item $X$ is a union of finitely many $B$-orbits; 
\item $B$ has a dense open orbit in $X$.
\item If $X$ is quasi-projective, then these conditions are equivalent to the following condition: 
For every $G$-linearized line bundle $\mc{L}$ on $X$, the $G$-module
$H^0(X,\mc{L})$ is multiplicity-free.
\end{enumerate}
\end{Theorem}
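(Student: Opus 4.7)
The plan is to verify the cycle $(iv) \Rightarrow (iii) \Rightarrow (ii) \Rightarrow (iv) \Leftrightarrow (i)$ among the first four conditions, and then link $(iv)$ with $(v)$ under quasi-projectivity. Several of these implications rest only on irreducibility of $X$ together with the fact that orbits of an algebraic group action are locally closed: a $B$-orbit of codimension zero has closure equal to $X$ and is open in its closure, hence open in $X$ (so $(ii) \Leftrightarrow (iv)$), and if $X$ is a finite union of locally closed $B$-orbits then the orbit of maximal dimension is dense and therefore open (so $(iii) \Rightarrow (iv)$). The direction $(iv)\Rightarrow (ii)$ is immediate.

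For $(iv) \Leftrightarrow (i)$ I would invoke Rosenlicht's theorem, which provides a $B$-stable open subset $U_0 \subseteq X$ admitting a geometric quotient $U_0 \to U_0/B$ whose function field is exactly $\C(X)^B$. Hence $\C(X)^B = \C$ is equivalent to $\dim U_0/B = 0$, that is, to the existence of an open $B$-orbit in $X$.

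The main obstacle is $(iv) \Rightarrow (iii)$: starting from an open $B$-orbit one must conclude that $X$ contains only finitely many $B$-orbits in total. This is a nontrivial theorem of Brion and, independently, Vinberg for actions of connected solvable groups on normal varieties; the proof proceeds by induction on dimension using the local structure theorem to show that each maximal proper $B$-invariant closed subset is again a finite union of orbit closures of smaller dimension. Rather than reprove this deep result, I would cite it.

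For the additional equivalence with $(v)$ under quasi-projectivity, I would use the standard representation-theoretic translation: the multiplicity of $V(\lambda)$ in $H^0(X,\mc{L})$ equals the dimension of the weight-$\lambda$ subspace of the $U$-invariants $H^0(X,\mc{L})^U$, since any highest weight vector of weight $\lambda$ generates a copy of $V(\lambda)$. If $B$ has an open orbit $\Omega$ and $x_0 \in \Omega$, then a $B$-semi-invariant section of $\mc{L}$ is determined up to scalar by its value in the one-dimensional fibre $\mc{L}_{x_0}$ (semi-invariance forces vanishing on a dense orbit, hence everywhere, whenever that value is zero), so each weight space of $H^0(X,\mc{L})^U$ is at most one dimensional. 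This yields $(iv)\Rightarrow (v)$. For $(v)\Rightarrow (iv)$, one uses quasi-projectivity to produce enough $G$-linearized ample line bundles whose sections separate generic $B$-orbits, so that the absence of an open $B$-orbit would furnish two linearly independent $B$-semi-invariants of the same weight, contradicting multiplicity-freeness. The hardest single step throughout is the finiteness assertion in $(iv) \Rightarrow (iii)$; the remaining steps blend dimension counting, Rosenlicht's theorem, and standard highest weight theory.
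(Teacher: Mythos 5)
The paper does not prove this statement at all: it is quoted as a known characterization of spherical varieties with a pointer to Perrin's survey, so there is no internal proof to compare against. Your outline is the standard argument found in that reference and is essentially correct: the elementary equivalences $(i)\Leftrightarrow(ii)\Leftrightarrow(iv)$ and $(iii)\Rightarrow(iv)$ via local closedness of orbits and Rosenlicht's theorem, the correct identification of $(iv)\Rightarrow(iii)$ as the one genuinely deep ingredient (the Brion--Vinberg finiteness theorem for normal varieties, which is reasonable to cite rather than reprove), and the usual highest-weight/evaluation-at-a-generic-point argument for $(iv)\Leftrightarrow(v)$, where for $(v)\Rightarrow(iv)$ the clean way to finish is to write a nonconstant $B$-invariant rational function as a ratio of two $B$-semi-invariant sections of the same weight of a $G$-linearized ample bundle, which your sketch gestures at adequately.
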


Let $X$ be a spherical $G$-variety, and let $x_0$ be a point in general position in $X$. 
Then $G\cdot x_0$ is a spherical homogeneous $G$-variety of the form $G/H$ for some closed subgroup $H$. 
We will call such subgroups {\em spherical subgroups in $G$}. 
Let us mention that $L=\mathbf{GL}_p\times \mathbf{GL}_q$
is one of the few examples of a Levi subgroup in a semisimple group which
is a spherical subgroup in its ambient reductive group $\mathbf{GL}_{p+q}$.
The classification of reductive spherical subgroups in $G$ is due to Brion~\cite{Brion87},
Kr\"amer~\cite{Kramer79}, and Mikityuk~\cite{Mikityuk86}. 

\subsection{Diagonal actions.}\label{SS:diagonal}

Let $P$ and $Q$ be two parabolic subgroups in a semisimple algebraic group $G$. 
If $L$ is a Levi subgroup of $P$, by~\cite[Lemma 5.3]{AvdeevPetukhov}, $G/Q$ is a spherical $L$-variety 
if and only if the diagonal action of $G$ on $G/P\times G/Q$ is spherical. 
As Littelmann showed in~\cite{Littelmann}, the study of such diagonal actions is very useful in representation theory. 
For $G\in \{\mbf{SL}_n,\mbf{Sp}_n\}$, Magyar, Zelevinsky, Weyman~\cite{MWZ1, MWZ2} classified all
parabolic subgroups $P_1,\dots, P_k\subset G$ such that the variety $\prod_{i=1}^r G/P_i$ has only finitely 
many diagonal $G$-orbits. For $P_1=B$, this classification amounts to the classification of 
spherical varieties of the form $\prod_{i=2}^r G/P_i$ with respect to the diagonal $G$-actions. 
In~\cite{Stembridge}, Stembridge presents the complete list of the diagonal spherical actions of $G$ on the products of the form $G/P\times G/Q$.
It follows from Stembridge's list that, if both of the parabolic subgroups $P$ and $Q$ are properly contained in $G$, 
then a Levi subgroup $L$ of $P$ can act spherically on $G/Q$ only if the simple factors of the semisimple part of $G$ avoid the types 
$\text{E}_8,\text{F}_4$, and $\text{G}_2$.

\begin{Remark}\label{R:Littelmann's}
We know from the classification schematics that, in type $\text{A}_n$, every Levi factor of every maximal parabolic subgroup $P$ of $G$ acts spherically on every Grassmannian of $G$.
In fact, according to~\cite{MWZ1}, for $G=\mathbf{SL}_{n+1}$, and all $I,J\subset S=\{s_1,\dots, s_n\}$,  
the variety $G/P_I\times G/P_J$ is a spherical $G$-variety if and only if  up to interchanging of $I$ and $J$, one has exactly 
one of the following possibilities: 
\begin{enumerate}
\item $I^c = \emptyset, \{s_1\}$ or $\{s_n\}$, 
\item $I^c = \{s_2\}$ or $\{s_{n-1}\}$ and $|J^c | = 2$,
\item $| I^c |= |J^c |=1$,
\item $|I^c |= 1$ and $J^c = \{ s_1,s_j \}, \{ s_j,s_{j+1} \}$ or $\{s_j,s_n\}$ ($1<j<n$).
\end{enumerate} 
In type $\text{D}_n$, every Levi factor of the maximal parabolic $P$ associated with 
$\omega_1$ or $\omega_n$ acts spherically on $G/P$, where $\omega_1$ and $\omega_n$ are the first and last fundamental weights. 
More precisely, according to~\cite{Stembridge}, for $G=\mathbf{Spin}_{2n}$ ($n\geq 4$) and all proper $I,J\subset S=\{s_1,\dots, s_n\}$,  
the variety $G/P_I\times G/P_J$ is a spherical $G$-variety if and only if  up to interchanging of $I$ and $J$, one has exactly 
one of the following possibilities: 
\begin{enumerate}
\item $I^c = \{s_n\}$ and $J^c = \{s_i\}, \{ s_1,s_i \}$ or $\{ s_2,s_i \}$ ($1\leq i\leq n$).
\item $I^c = \{s_1\}$ or $I^c = \{s_2\}$ and $J^c \subsetneq \{s_1,s_2,s_n\}$, $J^c \subseteq \{s_{n-1},s_n\}$,
or $J^c= \{s_{n-2}\}$, or 
\item ($n=4$ only) $I^c = \{s_1\}$ and $J^c= \{s_2,s_3\}$ or $I^c = \{s_2\}$ and $J^c= \{s_1,s_3\}$.
\end{enumerate} 
Here, the diagram labeling is given by 
\begin{figure}[htp]
\begin{center}

\begin{tikzpicture}[scale=.6pt]

\node at (-6,0) (a1) {$1$};
\node at (-4,0) (a3) {$3$};
\node at (-4,2) (a2) {$2$};
\node at (-2,0) (a4) {$4$};
\node at (0,0) (a5) {};
\node at (.5,0) (a6) {$\cdots$};
\node at (1,0) (a7) {};
\node at (3,0) (a8) {$n$};

\draw[-, thick] (a1) to (a3);
\draw[-, thick] (a2) to (a3);
\draw[-, thick] (a3) to (a4);
\draw[-, thick] (a4) to (a5);
\draw[-, thick] (a7) to (a8);

\end{tikzpicture}
\caption{The labeling of the nodes of $\text{D}_n$.}\label{F:Dn}
\end{center}
\end{figure}
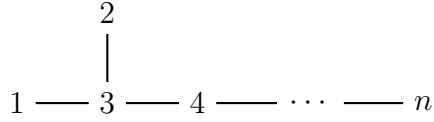
\end{Remark}

\section{Inversion Sets}\label{S:InversionSets}

We begin with a basic definition.

\begin{Definition}
For $v\in W$, the {\em right inversion set of $v$} is defined by $I(v) := \Phi^+ \cap v^{-1} ( \Phi^-)$. 
The {\em (left) inversion set of $v$} is defined by $N(v):= I(v^{-1})= \Phi^+ \cap v(\Phi^-)$.
\end{Definition}

We note that $I(v)$ and $N(v)$ need not be disjoint. 
In fact, it is easy to show the following simple fact (by using Proposition~\ref{P:B_0}):
\begin{Lemma}\label{L:involutions-inversions}
For $w\in W$, 
the left and the right inversion sets of $w$ are the same if and only if $w$ is an involution, that is, $w^2 = id$.
\end{Lemma}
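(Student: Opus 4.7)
The proof will split into the two implications.

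First, the easy direction. If $w$ is an involution then $w = w^{-1}$, so directly from the definitions
\[
N(w) = I(w^{-1}) = I(w),
\]
and there is nothing more to check.

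For the converse, the plan is to use the fact (Proposition~\ref{P:B_0}, cited in the statement) that an element $v \in W$ is uniquely determined by its right inversion set $I(v)$; equivalently, if $I(u) = I(v)$ then $u = v$. Assuming this, the argument is a one-liner: the hypothesis $I(w) = N(w)$ rewrites as $I(w) = I(w^{-1})$, which forces $w = w^{-1}$, i.e., $w^2 = \mathrm{id}$.

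The only step requiring thought is invoking the uniqueness statement correctly. If Proposition~\ref{P:B_0} is stated in the form "inversion sets are precisely the biclosed subsets of $\Phi^+$, and the map $v \mapsto I(v)$ is a bijection onto these", then the conclusion is immediate. If instead it is phrased in terms of reduced expressions (say, $I(v)$ recovers a reduced word for $v$ via a standard enumeration of positive roots sent to negatives), then I would reconstruct $w$ from $I(w)$ and reconstruct $w^{-1}$ from $I(w^{-1}) = N(w)$ in the same way, observing that the two reconstructions yield the same element. Either way, no case analysis or root system computation is required.

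I expect no real obstacle: the content of the lemma is entirely in the uniqueness-from-inversions result, and the lemma itself is essentially a rephrasing of that. The one small pitfall to watch is keeping the left/right conventions consistent, since $N(w)$ is defined as $I(w^{-1})$ and it is tempting to confuse $\Phi^+ \cap w(\Phi^-)$ with $\Phi^+ \cap w^{-1}(\Phi^-)$; the identity $I(w^{-1}) = \Phi^+ \cap w(\Phi^-)$ is a direct substitution and should be recorded explicitly before invoking the uniqueness proposition.
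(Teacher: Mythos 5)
Your proposal is correct and matches the paper's intended argument: the paper omits the proof, noting only that the lemma follows from Proposition~\ref{P:B_0}, and your use of the injectivity of the inversion-set map (together with $N(w)=I(w^{-1})$) is exactly that route.
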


\begin{Definition}
Let $V$ denote the Euclidean space that is spanned by a root system $\Phi$, and let $A$ be a subset of $\Phi^+$.
\begin{enumerate}
\item $A$ called {\em convex} if $\text{cone}(A)\cap \Phi=A$, where $\text{cone}(A)$ is the cone generated by $A$ in $V$; 
\item $A$ is called {\em coconvex} if $\Phi^+ \setminus A$ is convex; 
\item $A$ is called {\em closed} if for every $\alpha$ and $\beta$ from $A$, we have $\text{cone}(\{\alpha,\beta\})\cap \Phi \subseteq A$;
\item $A$ is called {\em coclosed} if $\Phi^+\setminus A$ is closed. 
\end{enumerate}
Finally, $A$ will be called {\em biclosed} if it is closed and coclosed at the same time;
$A$ will be called {\em biconvex} if it is convex and coconvex at the same time. 
\end{Definition}

\begin{Proposition}\label{P:B_0}
Let $\mathcal{B}_0:=\mathcal{B}_0(\Phi^+)$ denote the poset of biclosed subsets of $\Phi^+$ ordered with respect to inclusion of subsets.
Then the map $N: (W,\leq_R) \longrightarrow \mathcal{B}_0$ defined by $w\mapsto N(w)$ ($w\in W$) is a poset isomorphism between the right
weak order on $W$ and $(\mathcal{B}_0,\subseteq)$.
\end{Proposition}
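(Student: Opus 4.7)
The plan is to split the statement into four pieces: $N$ takes values in $\mc{B}_0$; $N$ is injective; $N$ is surjective onto $\mc{B}_0$; and $N$ is order-preserving in both directions. The first and third are precisely the content of the preceding proposition, so I would quote them. Injectivity is standard: a Weyl group element is determined by the set of positive roots it sends to negative roots, so $N(u)=N(w)$ forces $u^{-1}$ and $w^{-1}$ to have identical sign patterns on $\Phi$, whence $u=w$.

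For the forward direction of the order isomorphism, suppose $u\leq_R w$ and fix a right-weak-order chain $u\lessdot us_{i_1}\lessdot\cdots\lessdot us_{i_1}\cdots s_{i_k}=w$. A short computation starting from $s(\Phi^-)=(\Phi^-\setminus\{-\alpha_s\})\cup\{\alpha_s\}$ yields
\[
N(vs)=N(v)\,\sqcup\,\{v\alpha_s\}\qquad\text{whenever }\ell(vs)=\ell(v)+1.
\]
Iterating this identity along the chain produces a strictly increasing sequence $N(u)\subsetneq N(us_{i_1})\subsetneq\cdots\subsetneq N(w)$, so in particular $N(u)\subseteq N(w)$.

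The main obstacle is the converse: $N(u)\subseteq N(w)\Longrightarrow u\leq_R w$. My plan is to reduce this to the length identity $\ell(w)=\ell(u)+\ell(u^{-1}w)$, which is equivalent to $u\leq_R w$ (take a reduced expression for $u^{-1}w$ and concatenate with $u$; subadditivity of $\ell$ forces every prefix-length to add up). The triangle inequality $\ell(w)\leq\ell(u)+\ell(u^{-1}w)$ supplies one inequality for free, so it remains to prove $\ell(u^{-1}w)\leq\ell(w)-\ell(u)$. Setting $v:=u^{-1}w$ and using $N(v)=\{\gamma\in\Phi^+:w^{-1}u\gamma\in\Phi^-\}$, I would construct an injection
\[
\iota:N(v)\hookrightarrow N(u)\,\triangle\,N(w),\qquad \gamma\longmapsto \varepsilon(\gamma)\,u\gamma,
\]
where $\varepsilon(\gamma)\in\{\pm1\}$ is chosen so that $\varepsilon(\gamma)u\gamma\in\Phi^+$. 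A two-case analysis on the sign of $u\gamma$ shows that $\iota(\gamma)\in N(w)\setminus N(u)$ when $u\gamma>0$ and $\iota(\gamma)\in N(u)\setminus N(w)$ when $u\gamma<0$; injectivity is immediate because $\varepsilon(\gamma)$ can be recovered from which half of the symmetric difference contains $\iota(\gamma)$. Under the hypothesis $N(u)\subseteq N(w)$, the symmetric difference collapses to $N(w)\setminus N(u)$ of cardinality $\ell(w)-\ell(u)$, delivering the required upper bound and completing the proof.
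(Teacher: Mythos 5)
Your argument is correct and complete. Note that the paper itself offers no proof of this proposition: it is quoted as a known fact (the companion statement about biclosed sets is attributed to Dyer), so there is no in-paper argument to compare against; what you have written is the standard self-contained proof. All the individual steps check out: the identity $N(vs)=N(v)\sqcup\{v\alpha_s\}$ for $\ell(vs)=\ell(v)+1$ is verified correctly and gives the forward implication; the reduction of $u\leq_R w$ to the length additivity $\ell(w)=\ell(u)+\ell(u^{-1}w)$ is the right move; and the two-case analysis of the map $\gamma\mapsto\varepsilon(\gamma)u\gamma$ into $N(u)\,\triangle\,N(w)$ is accurate (when $u\gamma>0$ the image lies in $N(w)\setminus N(u)$, when $u\gamma<0$ it lies in $N(u)\setminus N(w)$, and injectivity follows since these two sets are disjoint). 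The only step you leave implicit is the identity $\ell(x)=|N(x)|$, which you need both to bound $\ell(u^{-1}w)$ by $|N(u^{-1}w)|$ and to compute $|N(w)\setminus N(u)|=\ell(w)-\ell(u)$; but this follows immediately by iterating your own formula $N(vs)=N(v)\sqcup\{v\alpha_s\}$ along a reduced word starting from $N(\mathrm{id})=\emptyset$, so it is recoverable from what you wrote. A one-line remark making that dependence explicit would make the write-up airtight.
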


Let $K$ be a subset of $S$, and let $v$ be an element from $W^K$. 
The stabilizer in $G$ of the Schubert subvariety $X_{vP_K}$ is the parabolic subgroup generated by $B$ and $s_\alpha$, where $\alpha \in \Delta \cap v( \Phi^- \cup \Phi_K)$.

\begin{Notation}
Let $w$ be an element from $W$. We will denote the subset of simple roots of $N(w)$ by $N_\Delta(w)$.
In other words, $N_\Delta(w) := \Delta \cap N(w)$. 
We will denote $Stab_G(X_{wB})$ by $P(w)$, and likewise, 
we will denote the Levi subgroup of $P(w)$ containing $T$ by $L(w)$.
\end{Notation}

The proof of the following lemma is evident from definitions, so we omit it. 
\begin{Lemma}\label{L:evident}
For $w\in W$, $P(w)$ (resp. $L(w)$) is generated by $B$ (resp. $T$) 
and $s_\alpha$, where $\alpha \in N_\Delta(w)$.
\end{Lemma}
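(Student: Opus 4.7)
The proof should essentially amount to unwinding the definitions, combined with the stabilizer description quoted in the paragraph immediately preceding the lemma. Here is the plan.

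First I would handle $P(w)$. The excerpt records that the stabilizer in $G$ of $X_{vP_K}$ is the parabolic subgroup generated by $B$ together with the simple reflections $s_\alpha$ for $\alpha \in \Delta \cap v(\Phi^-\cup \Phi_K)$. I would specialize this to the case $K = \emptyset$, so that $P_K = B$, $\Phi_K = \emptyset$, and $W^K = W$; then $P(w) = \mathrm{Stab}_G(X_{wB})$ is generated by $B$ and the reflections $s_\alpha$ with $\alpha \in \Delta \cap w(\Phi^-)$. The key identification is
\[
\Delta \cap w(\Phi^-) \;=\; \Delta \cap \Phi^+ \cap w(\Phi^-) \;=\; \Delta \cap N(w) \;=\; N_\Delta(w),
\]
where the first equality uses $\Delta \subseteq \Phi^+$ and the second is the definition $N(w) = \Phi^+\cap w(\Phi^-)$. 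This gives the $P(w)$ statement immediately.

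Next I would pass to $L(w)$. Since $P(w) \supseteq B$, it is a standard parabolic subgroup; by the previous paragraph it equals $P_I$ in the notation of (\ref{A:standard parabolic}) with $I = N_\Delta(w) \subseteq \Delta$. By the review of the Levi decomposition given in Section \ref{S:Preliminaries}, the standard Levi factor $L_I$ containing $T$ is $\langle T,\,U_\alpha : \alpha \in \Phi_I\rangle$, whose Weyl group is $W_I = \langle s_\alpha : \alpha \in I\rangle$. In particular $L_I$ is generated as an algebraic group by $T$ together with the reflections $s_\alpha$ for $\alpha \in I = N_\Delta(w)$ (each such $s_\alpha$ being represented, modulo $T$, by an element of $N_G(T)$ lying in $L_I$). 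Since $L(w)$ is defined as the Levi factor of $P(w)$ containing $T$, it coincides with $L_I$, yielding the second assertion.

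There is no real obstacle; the only thing to be careful about is the bookkeeping that $\Delta \cap w(\Phi^-)$ really is $N_\Delta(w)$ (it is, because simple roots are positive), and that the reduction to a standard parabolic is legitimate (it is, because $B \subseteq P(w)$). Once these two points are noted, the lemma is a direct consequence of the general stabilizer formula quoted just before its statement.
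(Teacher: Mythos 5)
Your argument is correct and is exactly the intended (omitted) one: the paper declares the lemma ``evident from definitions,'' and the content is precisely your specialization of the stabilizer description to $K=\emptyset$ together with the observation that $\Delta\cap w(\Phi^-)=\Delta\cap\Phi^+\cap w(\Phi^-)=N_\Delta(w)$, followed by reading off the standard Levi factor of $P_I$ with $I=N_\Delta(w)$. The only caveat, inherited from the paper's own loose phrasing, is that ``$L(w)$ is generated by $T$ and $s_\alpha$'' should be understood as $L(w)=L_I=\langle T, U_{\pm\alpha}:\alpha\in N_\Delta(w)\rangle$ (as the paper itself uses it in the proof of Lemma~\ref{L:U_w^-}), since the subgroup literally generated by $T$ and representatives of the $s_\alpha$ lies in $N_G(T)$.
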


Let $w$ be an element from $W$ such that $w= vu$ for some $v,u\in W$ with $\ell(w) = \ell(v)+\ell(u)$. 
Hence, $v\leq_R w$. By~\cite[Proposition 2.1]{HohlwegLabbe}, we know that  
\begin{align}\label{A:wg}
N(w) = N(v) \sqcup v (N(u)).
\end{align}
In particular, we know that $N_\Delta(v) \subseteq N_\Delta(w)$.
\begin{Corollary}\label{C:P(v)inP(w)}
Let $v$ and $w$ be two elements from $W$. 
If $v\leq_R w$, then $P(v) \subseteq P(w)$. 
Furthermore, if $w=vu$ for some $u\in W$, then the inclusion $P(v)\subseteq P(w)$ 
is strict if and only if there exists a root $\alpha \in N(u)$ such that $v(\alpha) \in \Delta$.
\end{Corollary}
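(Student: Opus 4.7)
The approach is a direct combinatorial reduction to the statement about inversion sets and then an appeal to Lemma \ref{L:evident}. The key input is the factorization formula (\ref{A:wg}), which says that whenever $w = vu$ with $\ell(w) = \ell(v) + \ell(u)$, the left inversion set decomposes as a disjoint union $N(w) = N(v) \sqcup v(N(u))$.

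The plan is as follows. First, for the containment $P(v) \subseteq P(w)$, I would unpack the assumption $v \leq_R w$ in the right weak order to produce an element $u \in W$ with $w = vu$ and $\ell(w) = \ell(v) + \ell(u)$; applying (\ref{A:wg}) then yields $N(v) \subseteq N(w)$, and intersecting with $\Delta$ gives $N_\Delta(v) \subseteq N_\Delta(w)$. By Lemma \ref{L:evident}, both $P(v)$ and $P(w)$ are the standard parabolic subgroups determined by $N_\Delta(v)$ and $N_\Delta(w)$ respectively, so the inclusion $P(v) \subseteq P(w)$ follows immediately from the bijective correspondence between standard parabolic subgroups containing $B$ and subsets of $\Delta$.

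For the strictness criterion, I would use the disjointness in (\ref{A:wg}) to write
\[
N_\Delta(w) = \bigl(\Delta \cap N(v)\bigr) \,\sqcup\, \bigl(\Delta \cap v(N(u))\bigr) = N_\Delta(v) \,\sqcup\, \bigl(\Delta \cap v(N(u))\bigr).
\]
Again via the correspondence between standard parabolics and subsets of $\Delta$, the inclusion $P(v) \subseteq P(w)$ is strict if and only if $N_\Delta(v) \subsetneq N_\Delta(w)$, equivalently $\Delta \cap v(N(u)) \neq \emptyset$; and this in turn is equivalent to the existence of $\alpha \in N(u)$ with $v(\alpha) \in \Delta$, which is exactly the stated condition.

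The proof has no real obstacle: the only subtle point is that the factorization (\ref{A:wg}) is a \emph{disjoint} union, which is what lets me conclude that the simple roots contributed by $v(N(u))$ are genuinely new relative to $N_\Delta(v)$ and hence strictly enlarge the generating set in Lemma \ref{L:evident}. Everything else is bookkeeping, and I would keep the write-up to a short paragraph.
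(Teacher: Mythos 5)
Your proposal is correct and follows exactly the route the paper intends: the paper's own proof is the one-line remark that the corollary is a straightforward consequence of Lemma~\ref{L:evident} and the disjoint decomposition (\ref{A:wg}), and your write-up simply fills in those details, including the correct observation that disjointness is what makes the strictness criterion an equivalence.
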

\begin{proof}
This is a straightforward consequence of Lemma~\ref{L:evident} and (\ref{A:wg}). 
\end{proof}



\begin{Lemma}\label{L:U_w^-}
Let $X_{wB}$ be a Schubert variety in $G/B$. 
If the unipotent group $U_w^- := \langle U_\alpha :\ \alpha \in I(w) \rangle$ is contained in $L(w)$, 
then $X_{wB}$ is a spherical $L(w)$-variety. 
\end{Lemma}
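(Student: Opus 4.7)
The plan is to show that under the hypothesis, $X_{wB}$ contains a dense $L(w)$-orbit isomorphic to the full flag variety of $L(w)$, and then invoke the classical sphericity of flag varieties. The containment $U_w^-\subseteq L(w)$ is equivalent to the root-theoretic statement $I(w)\subseteq \Phi^+_{N_\Delta(w)}$, since $U_w^-$ is generated by the root subgroups indexed by $I(w)$ and $L(w)$ has root subsystem $\Phi_{N_\Delta(w)}$.

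The combinatorial core of the argument is to show that $I(w)\subseteq \Phi^+_{N_\Delta(w)}$ forces $w\in W_{N_\Delta(w)}$. I would argue by induction on $\ell(w)$: any right descent $s_i \in D_R(w)$ has $\alpha_i\in I(w)\cap \Delta \subseteq \Phi^+_{N_\Delta(w)}\cap \Delta = N_\Delta(w)$, hence $s_i\in W_{N_\Delta(w)}$. Writing $w = vs_i$ with $\ell(v) = \ell(w)-1$, the identity $I(w) = \{\alpha_i\}\sqcup s_i(I(v))$, obtained from equation~\eqref{A:wg} applied to $w^{-1} = s_iv^{-1}$, together with the fact that $s_i$ permutes $\Phi^+_{N_\Delta(w)}\setminus\{\alpha_i\}$, yields $I(v)\subseteq \Phi^+_{N_\Delta(w)}$; the induction hypothesis then gives $v\in W_{N_\Delta(w)}$, so $w\in W_{N_\Delta(w)}$.

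Since $W_{N_\Delta(w)}$ is the Weyl group of $L(w)$, a representative of $w$ may be chosen inside $L(w)$, so the $L(w)$-orbit through $wB/B$ equals $L(w)\cdot B/B \cong L(w)/(L(w)\cap B)$, the full flag variety of $L(w)$, of dimension $|\Phi^+_{N_\Delta(w)}|$. The hypothesis gives $\ell(w) = |I(w)|\le |\Phi^+_{N_\Delta(w)}|$, while the orbit is contained in $X_{wB}$ of dimension $\ell(w)$; equality then follows and the orbit is dense in $X_{wB}$. Full flag varieties are spherical by Bruhat decomposition, so the Borel $B_L := L(w)\cap B$ of $L(w)$ has an open orbit on its flag variety, which remains open in $X_{wB}$; Theorem~\ref{T:Perrin} then yields the sphericity of $X_{wB}$ as an $L(w)$-variety. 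The principal obstacle is the inductive claim placing $w$ inside $W_{N_\Delta(w)}$; once this is established, the rest is a dimension count and the classical sphericity of flag varieties.
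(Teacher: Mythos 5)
Your proof is correct, but it takes a genuinely different route from the paper's. The paper argues in one step: the open cell $C_w$ is the image of $U_w^-$ under $u\mapsto uwB/B$, and if $U_w^-\subseteq L(w)$ then $U_w^-$ lies in the maximal unipotent $\langle U_\beta:\ \beta\in N_\Delta(w)\rangle$ of $L(w)$, hence inside the Borel $B\cap L(w)$; so the single $(B\cap L(w))$-orbit through $wB/B$ already contains the dense cell $C_w$, and sphericity follows with no further work. You instead prove the stronger structural fact that $I(w)\subseteq\Phi^+_{N_\Delta(w)}$ forces $w\in W_{N_\Delta(w)}$ (your induction via $I(w)=\{\alpha_i\}\sqcup s_i(I(v))$ is sound, provided one reads the inductive statement with the set $J=N_\Delta(w)$ held fixed throughout, which is how you use it), and then a dimension count identifies $X_{wB}$ with the full flag variety $L(w)/(L(w)\cap B)$. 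What your route buys is precisely this extra conclusion: the hypothesis of the lemma actually forces $w$ to be the longest element of the parabolic subgroup $W_{N_\Delta(w)}$ and $X_{wB}$ to be $L(w)$-homogeneous --- a fact the paper only records separately (in Proposition~\ref{P:U_w^-} and implicitly in Example~\ref{E:S4}, where the elements of $S_4$ satisfying the hypothesis are exactly the maximal elements of standard parabolic subgroups). What it costs is length: the paper's argument needs neither the induction nor the completeness/dimension argument. Both proofs are valid.
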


\begin{proof}
It is well-known that the open $B$-orbit $C_{wB}$ in $X_{wB}$ is isomorphic, as a variety, 
to the unipotent subgroup $U_w^-$ that is directly spanned by the root subgroups $U_\alpha$, $\alpha \in \Phi^+ \cap w^{-1}(\Phi^-)$.
In other words, $U_w^- = \langle U_\alpha :\ \alpha \in I(w) \rangle$ is isomorphic to the open Bruhat cell of $X_{wB}$. 
This isomorphism is given by $u \mapsto uwB/B$ ($u\in U_w^-$), see~\cite[Theorem 14.12]{Borel} where $U_w^-$ is denoted by $U_w'$. 

Lemma~\ref{L:evident} implies that $L(w)$ is generated by $T$ and the root subgroups $U_{\pm \beta}$, where $\beta \in \Delta \cap N(w)$ in $Stab_G(X_{wB})$.
In particular, the subgroup $\langle U_\beta :\ \beta \in \Delta \cap N(w) \rangle$ is a maximal unipotent subgroup in $L(w)$. 
If $U_w^-$ is contained in $L(w)$, then it is contained in the maximal unipotent subgroup 
$\langle U_\beta :\ \beta \in \Delta \cap N(w) \rangle$. 
But then $L(w)$ has an open dense orbit in $X_{wB}$. 
This finishes the proof of our assertion. 
\end{proof}

\begin{Remark}
Lemma~\ref{L:U_w^-} can be paraphrased as follows:
if $I(w) \subseteq \Phi_{N_\Delta(w)}$, then $X_{wB}$ is $L(w)$-spherical. 
Clearly, in this case, we must have $I(w)\cap \Delta \subseteq N_\Delta (w)$.
\end{Remark}

\begin{Example}\label{E:S4}
Let $\alpha_i$ ($i\in \{1,2,3\}$) denote the standard simple root $\varepsilon_i -\varepsilon_{i+1}$
in $\Phi$, the root system for $(\mathbf{SL}_4,\mathbf{T})$, where $\mathbf{T}$ is the diagonal torus 
in $\mathbf{SL}_4$. The Weyl group of $\Phi$ is the symmetric group of permutations, $S_4$. 
The elements $w\in S_4$ that satisfy the hypothesis of Lemma~\ref{L:U_w^-} are given by
\[
1234,2134,1324,1243, 1432, 2143, 3214, 4321.
\]
In Figure~\ref{F:S4}, we depict $(S_4,\leq_{R})$ together with 
the simple roots of the inversion sets of each of its elements.

\begin{figure}[htp]
\begin{center}

\begin{tikzpicture}[scale=.2pt]

\node at (0,0) (a) {${1234}\atop{\emptyset}$};

\node at (-8,7) (b1) {${1243}\atop{\alpha_3}$};
\node at (0,7) (b2) {${1324}\atop{\alpha_2}$};
\node at (8,7) (b3) {${2134}\atop{\alpha_1}$};

\node at (-16,14) (c1) {${1423}\atop{\alpha_3}$};
\node at (-8,14) (c2) {${1342}\atop{\alpha_2}$};
\node at (0,14) (c3) {${2143}\atop{\alpha_1,\alpha_3}$};
\node at (8,14) (c4) {${3124}\atop{\alpha_2}$};
\node at (16,14) (c5) {${2314}\atop{\alpha_1}$};

\node at (-20,21) (d1) {${1432}\atop{\alpha_2,\alpha_3}$};
\node at (-12,21) (d2) {${4123}\atop{\alpha_3}$};
\node at (-4,21) (d3) {${2413}\atop{\alpha_1,\alpha_3}$};
\node at (4,21) (d4) {${3142}\atop{\alpha_2}$};
\node at (12,21) (d5) {${3214}\atop{\alpha_1,\alpha_2}$};
\node at (20,21) (d6) {${2341}\atop{\alpha_1}$};

\node at (-16,28) (e1) {${4132}\atop{\alpha_2,\alpha_3}$};
\node at (-8,28) (e2) {${4213}\atop{\alpha_1,\alpha_3}$};
\node at (0,28) (e3) {${3412}\atop{\alpha_2}$};
\node at (8,28) (e4) {${2431}\atop{\alpha_1,\alpha_3}$};
\node at (16,28) (e5) {${3241}\atop{\alpha_1,\alpha_2}$};

\node at (-8,35) (f1) {${4312}\atop{\alpha_2,\alpha_3}$};
\node at (0,35) (f2) {${4231}\atop{\alpha_1,\alpha_3}$};
\node at (8,35) (f3) {${3421}\atop{\alpha_1,\alpha_2}$};

\node at (0,42) (g) {${4321}\atop {\alpha_1,\alpha_2,\alpha_3}$};

\draw[-, thick] (a) to (b1);
\draw[-, thick] (a) to (b2);
\draw[-, thick] (a) to (b3);

\draw[-, thick] (b1) to (c1);
\draw[-, thick] (b1) to (c3);

\draw[-, thick] (b2) to (c2);
\draw[-, thick] (b2) to (c4);

\draw[-, thick] (b3) to (c3);
\draw[-, thick] (b3) to (c5);

\draw[-, thick] (c1) to (d1);
\draw[-, thick] (c1) to (d2);

\draw[-, thick] (c2) to (d1);
\draw[-, thick] (c2) to (d4);

\draw[-, thick] (c3) to (d3);

\draw[-, thick] (c4) to (d4);
\draw[-, thick] (c4) to (d5);

\draw[-, thick] (c5) to (d5);
\draw[-, thick] (c5) to (d6);

\draw[-, thick] (d1) to (e1);

\draw[-, thick] (d2) to (e1);
\draw[-, thick] (d2) to (e2);

\draw[-, thick] (d3) to (e2);
\draw[-, thick] (d3) to (e4);

\draw[-, thick] (d4) to (e3);

\draw[-, thick] (d5) to (e5);

\draw[-, thick] (d6) to (e4);
\draw[-, thick] (d6) to (e5);

\draw[-, thick] (e1) to (f1);

\draw[-, thick] (e2) to (f2);

\draw[-, thick] (e3) to (f1);
\draw[-, thick] (e3) to (f3);

\draw[-, thick] (e4) to (f2);

\draw[-, thick] (e5) to (f3);

\draw[-, thick] (f1) to (g);
\draw[-, thick] (f2) to (g);
\draw[-, thick] (f3) to (g);

\end{tikzpicture}
\caption{The right weak order on $S_4$.}\label{F:S4}
\end{center}
\end{figure}
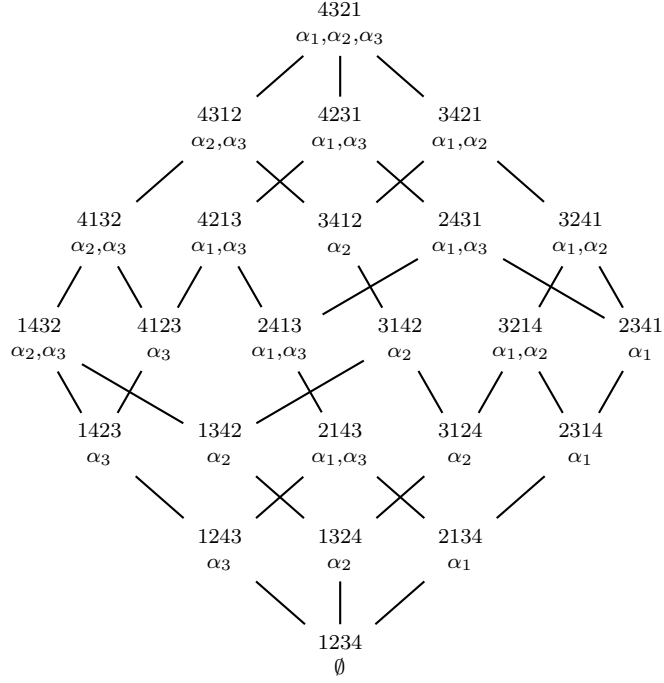

\begin{Proposition}\label{P:U_w^-}
Let $X_{wB}$ be a Schubert variety in $G/B$. 
If one of the following equivalent conditions are satisfied,  
then $X_{wB}$ is a homogeneous spherical $L(w)$-variety. 
\begin{enumerate}
\item[(1)] The set of elements in the lower interval $[id,w^{-1}]_L$ in the left weak order on $W$ 
is equal to the set of elements in the lower interval $[id,w^{-1}]$ in the Bruhat-Chevalley order on $W$.
\item[(2)] The set of elements in the lower interval $[id,w]_R$ in the right weak order on $W$ 
is equal to the set of elements in the lower interval $[id,w]$ in the Bruhat-Chevalley order on $W$.
\end{enumerate}
\end{Proposition}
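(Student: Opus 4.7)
The plan has three stages: first, prove that conditions (1) and (2) are equivalent; second, deduce from (2) that $w$ must be the longest element $w_{0,I}$ of a standard parabolic subgroup $W_I$; third, observe that for such a $w$, the Schubert variety $X_{wB}$ is already a flag variety for $L(w)$, hence homogeneous, and therefore spherical by Lemma~\ref{L:U_w^-}.

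For the first stage I will use that the map $v\mapsto v^{-1}$ is a poset automorphism of $(W,\leq)$ under the Bruhat-Chevalley order, while it exchanges the two weak orders, i.e.\ carries $\leq_R$ to $\leq_L$ and vice versa. Under this bijection, the Bruhat interval $[id,w^{-1}]$ maps onto $[id,w]$ and the left weak order interval $[id,w^{-1}]_L$ maps onto $[id,w]_R$, so the equality of the first pair is equivalent to the equality of the second pair.

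For the second stage, assume condition (2) and set $I := S(w)$. Each simple reflection $s\in I$ satisfies $s\leq w$ in the Bruhat-Chevalley order by the definition of the support, so by hypothesis $s\leq_R w$, which means precisely that $s\in D_L(w)$. Since the containment $D_L(w)\subseteq S(w)=I$ always holds, we obtain $D_L(w)=I$. Now $w\in W_I$ and $w$ has every element of $I$ as a left descent; a standard argument in Coxeter theory (descending induction on length inside $W_I$) then forces $w$ to be the unique longest element $w_{0,I}$ of $W_I$.

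For the third stage, observe that for $w=w_{0,I}$ we have $N(w)=\Phi_I^+$, so $N_\Delta(w)=I$, and Lemma~\ref{L:evident} identifies $L(w)$ with the standard Levi subgroup $L_I$. The Schubert variety is then $X_{wB}=\overline{Bw_{0,I}B}/B=P_I/B\cong L_I/(L_I\cap B)$, the complete flag variety of $L_I$, which is visibly homogeneous, and therefore spherical, under $L_I=L(w)$. As a sanity check one may appeal to Lemma~\ref{L:U_w^-}: since $w_{0,I}$ is an involution, Lemma~\ref{L:involutions-inversions} gives $I(w_{0,I})=N(w_{0,I})=\Phi_I^+$, and hence $U_w^-\subseteq L_I=L(w)$. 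The main obstacle is the identification of $w$ as $w_{0,I}$ in the second stage; once that is in hand, the remainder is essentially bookkeeping with the structural lemmas already recorded in Section~\ref{S:InversionSets}.
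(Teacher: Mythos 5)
Your proposal is correct and follows essentially the same route as the paper: both use the inverse map to exchange the two conditions, deduce $D_L(w)=S(w)$ from condition (2), conclude $w=w_{0,S(w)}$, and identify $X_{wB}$ with the flag variety $L(w)/(L(w)\cap B)$, which is homogeneous and hence spherical. Your extra verification via Lemma~\ref{L:U_w^-} is a harmless addition.
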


\begin{proof}
The equivalence of (1) and (2) follows from the well-known isomorphism $w\mapsto w^{-1}$ between 
the posets $(W,\leq_L) \cong (W,\leq_R)$.
We proceed with the assumption that (2) holds true. 
Since $[id,w]$ contains all simple reflections from the support of $w$, we see that $S(w)\subseteq [id, w]_R$. 
But a simple reflection $s$ is contained in the interval $[id,w]_R$ if and only if $s$ is a left descent of $w$. 
In other words, in our case, $D_L(w) = S(w)$. This holds true if and only if $w$ is the maximal element of the parabolic subgroup $W_J$,
where $J=S(w)$. Then we get $X_{wB} = P(w) B/B = L(w) B/B \cong L(w)/L(w)\cap B$.
It follows that $X_{wB}$ is a (projective) homogeneous $L(w)$-variety, hence it is a spherical $L(w)$-variety. 
\end{proof}

The entire inversion set $N(w)$ can be computed using~\cite[Proposition 2.1(i)]{HohlwegLabbe}. 
Here is a simple result that shows how to compute $N_\Delta(w)$. 
\begin{Proposition}\label{P:simpleresult}
Let $w$ be an element from $W$. 
Then we have the following equalities: 
$$
\displaystyle N_\Delta (w) = \{ \alpha \in \Delta :\ s_\alpha \leq_R w \} = \bigcup_{v\lessdot_R w} N_\Delta(v).
$$
\end{Proposition}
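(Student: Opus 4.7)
The plan is to derive both equalities from Proposition~\ref{P:B_0} together with the elementary computation that every simple reflection $s_\alpha$ has left inversion set $N(s_\alpha) = \{\alpha\}$.

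First I would establish the equality $N_\Delta(w) = \{\alpha \in \Delta : s_\alpha \leq_R w\}$. By Proposition~\ref{P:B_0}, the assignment $w \mapsto N(w)$ is a poset isomorphism from $(W,\leq_R)$ onto the biclosed subsets of $\Phi^+$ ordered by inclusion, so $s_\alpha \leq_R w$ if and only if $N(s_\alpha) \subseteq N(w)$. A direct calculation, using the fact that $s_\alpha$ sends $\alpha$ to $-\alpha$ while permuting the remaining positive roots, shows $N(s_\alpha) = \{\alpha\}$. Hence $s_\alpha \leq_R w$ is equivalent to $\alpha \in N(w)$, and since $\alpha \in \Delta$ this is in turn equivalent to $\alpha \in N_\Delta(w)$.

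Next I would show $N_\Delta(w) = \bigcup_{v \lessdot_R w} N_\Delta(v)$. The inclusion $\supseteq$ is immediate from the monotonicity of $N$ under the right weak order: any $v \lessdot_R w$ satisfies $N(v) \subseteq N(w)$, which gives $N_\Delta(v) \subseteq N_\Delta(w)$. For the reverse inclusion, let $\alpha \in N_\Delta(w)$. By the first equality, $s_\alpha \leq_R w$. Assuming $\ell(w) \geq 2$ (the case $\ell(w) \leq 1$ is handled by inspection), we have $s_\alpha <_R w$, so since $(W,\leq_R)$ is graded by length, we can interpolate an element $v$ with $s_\alpha \leq_R v \lessdot_R w$. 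Monotonicity applied to the inclusion $N(s_\alpha) \subseteq N(v)$ yields $\alpha \in N_\Delta(v)$, completing the argument.

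There is no real obstacle here; the whole proof is a translation of right weak order comparisons into containments of inversion sets by way of Proposition~\ref{P:B_0}, with the simple reflections acting as atoms that detect precisely the simple roots of $N(w)$. The only point that requires any care is the degenerate case where $w$ has length at most one, which is dispatched by a one-line direct check.
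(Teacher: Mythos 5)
Your argument follows the same route as the paper: both equalities are extracted from Proposition~\ref{P:B_0} together with the computation $N(s_\alpha)=\{\alpha\}$, and your interpolation step for the second equality is exactly the justification the paper leaves implicit when it calls that equality an ``immediate consequence'' of the first. The first equality is fine as you present it.

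One caveat: your parenthetical claim that the case $\ell(w)\leq 1$ is ``handled by inspection'' is not accurate, because for $\ell(w)=1$ the second equality actually fails. If $w=s_\beta$ with $\beta\in\Delta$, then $N_\Delta(w)=\{\beta\}$, whereas the only element covered by $s_\beta$ in the right weak order is the identity, so $\bigcup_{v\lessdot_R w}N_\Delta(v)=N_\Delta(id)=\emptyset$. This is really a defect of the statement as printed rather than of your method (the paper's own one-line proof has the same blind spot), and it is harmless for the one place the proposition is applied --- the $\text{G}_2$ argument only invokes it with $v\neq id$, hence $\ell(w)\geq 2$. But your write-up should record that the second equality holds for $\ell(w)\geq 2$ and fails for $\ell(w)=1$, rather than asserting that the degenerate case checks out.
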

\begin{proof}
Let $\alpha$ be an element from $N_\Delta (w)$. 
By Proposition~\ref{P:B_0}, we know that $N(s_\alpha)$ is a subset of $N(w)$. 
But $N(s_\alpha) = N_\Delta(s_\alpha)$, hence, we get the first equality. 
Our second equality is an immediate consequence of the first one. 
\end{proof}

\end{Example}

\section{BP Decomposition and Homogeneous Fiber Bundles}\label{S:Review}

Let $Q$ be a parabolic subgroup containing a parabolic subgroup $P$, and let $w$ be an element from $W^P$. 
Then $w$ can be expressed, in a unique way, as a product of the form $w= vu$, where $v\in W^Q$ and $u\in W^P \cap W_Q$. 
This decomposition of $w$ is called the {\em parabolic decomposition of $w$ with respect to $Q$}. 
If any of the following equivalent conditions is satisfied, then the parabolic decomposition $w=vu$ is called a {\em right BP decomposition of $w$}:
\begin{enumerate}
\item $u$ is the maximal length element in $[id,w]\cap W_Q$.
\item The Poincar\'e polynomial of the Schubert variety $X_{wP}$ is the product of the Poincar\'e polynomials of the 
Schubert varieties $X_{vQ}$ and $X_{uP}$. 
\item The set $S(v) \cap W_Q$ is contained in $D_L(u)=\{ s\in S:\ \ell(su) <  \ell(u) \}$.
\item The projection $p_{P,Q} \vert_{X_{wP}} : X_{wP} \to X_{vQ}$ is a fiber bundle with fibers isomorphic to $X_{uP}$.
\end{enumerate}

The ``left BP decomposition of $w$'' is defined similarly, by using the minimal length right coset representatives instead of the minimal length left coset representatives. 
From now on we will refer to a right BP decomposition simply as a {\em BP decomposition}.
A BP decomposition $w= vu$ ($w\in W^P$, $v\in W^Q$) is said to be a {\em Grassmann BP decomposition of $w$} if $|S(w)| = |S(v)| +1$. If $S(w)$ is equal to $S$, then $Q$ is a maximal parabolic subgroup.

\vspace{.25cm}

Next, we will briefly review homogeneous fiber bundles from a more general viewpoint.   
Let $K$ be a connected algebraic subgroup of a connected algebraic group $L$, and let $Z$ be a normal $K$-variety. 
A {\em homogeneous bundle over $L/K$ associated with $Z$}, denoted by $L*_K Z $,
is the quotient variety $(L\times Z) / K$, where the action of $K$ is given by 
\[
k\cdot (l,z) = ( lk^{-1}, kz)\ \text { for every } k\in K, l\in L,z\in Z.
\]
Then $L$ also acts on $L*_KZ$ via its left multiplication on the first factor of $L\times Z$. 
The natural projection $L*_K Z\to L/K$ is $L$-equivariant.

A discussion of the following results as well as the references to their proofs can be found in~\cite[Appendix 1]{Avdeev15}.
\begin{Proposition}\label{P:A1}
 If $X$ is an $L$-variety and if it admits an $L$-equivariant morphism $\Psi: X \to L/K$, then $X \cong L*_K Z$, where $Z= \Psi^{-1}(o)$. Here, $o$ stands for the base point $idK$ in $L/K$.
\end{Proposition}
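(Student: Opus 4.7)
The plan is to construct the isomorphism $L\ast_K Z \to X$ directly from the $L$-action on $X$. I would define a morphism $\Phi: L\times Z \to X$ by $\Phi(l,z) = l\cdot z$, where the dot denotes the given $L$-action on $X$. Since $Z = \Psi^{-1}(o)$ is $K$-stable (because $\Psi$ is $L$-equivariant and $K$ fixes $o = eK$), the map $\Phi$ is invariant under the twisted $K$-action $k\cdot (l,z) = (lk^{-1}, kz)$: indeed $\Phi(lk^{-1}, kz) = lk^{-1}\cdot kz = l\cdot z$. Hence $\Phi$ descends to an $L$-equivariant morphism $\bar\Phi: L\ast_K Z \to X$.

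Next, I would check that $\bar\Phi$ is bijective on closed points. For surjectivity, given $x \in X$, choose $l\in L$ with $lK = \Psi(x)$; then $\Psi(l^{-1}x) = l^{-1}\Psi(x) = o$, so $l^{-1}x \in Z$, and $\bar\Phi([l, l^{-1}x]) = x$. For injectivity, suppose $\bar\Phi([l_1, z_1]) = \bar\Phi([l_2, z_2])$, i.e.\ $l_1 z_1 = l_2 z_2$. Applying $\Psi$ gives $l_1 o = l_2 o$, so $k := l_2^{-1}l_1 \in K$; then $l_1 z_1 = l_2 z_2$ yields $kz_1 = z_2$, which means $(l_2,z_2) = (l_1 k^{-1}, k z_1) = k\cdot(l_1,z_1)$, so $[l_1,z_1] = [l_2,z_2]$ in the quotient.

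To upgrade bijectivity to an isomorphism of varieties, I would use that $L\to L/K$ is a locally trivial principal $K$-bundle (in the étale topology, since $K$ is a closed subgroup of the connected algebraic group $L$). Over an open subset $U\subset L/K$ that trivializes this bundle, both $X|_{\Psi^{-1}(U)}$ and $(L\ast_K Z)|_U$ become identified with $U\times Z$ in a way compatible with $\bar\Phi$, so $\bar\Phi$ is an isomorphism locally, hence globally. Alternatively, one can construct an explicit inverse morphism by considering the fiber product $L\times_{L/K} X$: the map $(l,x)\mapsto (l, l^{-1}x)$ identifies it with $L\times Z$, and both sides have the same $K$-quotient, with $X$ being the quotient on one side and $L\ast_K Z$ on the other.

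The main obstacle is the usual scheme-theoretic subtlety that the choice of $l\in L$ with $lK = \Psi(x)$ is not canonical, so the inverse map cannot be written as a single global morphism on points; this is precisely why invoking the local triviality of $L\to L/K$ (or equivalently the faithful flatness of the quotient) is essential to promote the set-theoretic inverse to a morphism of varieties. Normality of $Z$ plays no role beyond ensuring the associated bundle $L\ast_K Z$ is itself a variety in the sense fixed at the start of the paper.
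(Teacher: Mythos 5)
Your argument is correct. Note, however, that the paper does not prove this proposition at all: it is quoted as a known fact, with the reader referred to Appendix 1 of Avdeev's paper for discussion and references. So there is no in-paper proof to compare against; what you have written is essentially the standard argument one finds in those references. Your construction of $\bar\Phi: L\ast_K Z\to X$ by descending the action map, the verification of bijectivity on closed points, and — crucially — the passage from bijectivity to isomorphism via local triviality of $L\to L/K$ (or the fiber-product identification $L\times_{L/K}X\cong L\times Z$, $(l,x)\mapsto(l,l^{-1}x)$, followed by taking $K$-quotients) are all sound. You are right to insist on that last step rather than stopping at bijectivity, since a bijective morphism need not be an isomorphism in general; over $\C$ and with the étale-local sections of $L\to L/K$ that exist because the quotient map is smooth, the descent goes through. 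The one point you wave at a little quickly is the existence of the quotient $(L\times Z)/K$ as a variety in the first place; the paper takes this for granted in its definition of $L\ast_K Z$, and its construction is in fact carried out by the same local-triviality gluing you invoke, so nothing is circular, but it is worth being aware that this is where the hypotheses on $Z$ (e.g.\ normality, quasi-projectivity of $K$-stable charts) actually earn their keep.
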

\begin{Proposition}\label{P:A2}
 The variety $X \cong L*_K Z$ is smooth (resp. normal) if and only if $Z$ is smooth (resp. normal).
\end{Proposition}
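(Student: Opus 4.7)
The plan is to exploit two interchangeable perspectives on $X = L *_K Z$: as the total space of the locally trivial fibration $\pi : X \to L/K$ with fiber $Z$, and as the base of the principal $K$-bundle $q : L \times Z \to X$ obtained by passing to the quotient by the free $K$-action $k\cdot(l,z)=(lk^{-1},kz)$. Both points of view reduce the statement to the fact that smoothness and normality are local properties that descend through smooth surjective morphisms, combined with the triviality that $L\times Z$ has either property if and only if $Z$ does.

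First, I would record that the action of $K$ on $L\times Z$ is free, so the geometric quotient $q: L\times Z \to L *_K Z$ exists and is a principal $K$-bundle. Since $K$ is a connected algebraic group (hence smooth), $q$ is faithfully flat and smooth of relative dimension $\dim K$, with fibers isomorphic to $K$. This is standard in characteristic zero, and a precise reference is given in~\cite[Appendix 1]{Avdeev15}.

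Next, I would invoke the descent/ascent properties of smooth surjective morphisms: for such an $f : Y \to X$, the source $Y$ is smooth (resp.\ normal) if and only if $X$ is smooth (resp.\ normal). Applying this to $q$, one obtains that $X$ is smooth (resp.\ normal) if and only if $L\times Z$ is. Since $L$ is a smooth connected algebraic group, the projection $L\times Z \to Z$ is a smooth surjection as well (in fact a trivial $L$-bundle), so $L\times Z$ is smooth (resp.\ normal) if and only if $Z$ is smooth (resp.\ normal). Chaining these equivalences yields the proposition.

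The only genuinely subtle point is justifying that $q$ is a smooth surjective morphism in the appropriate sense; as an alternative, one could instead argue from the fiber bundle side by using Proposition~\ref{P:A1} to trivialize $\pi : X \to L/K$ locally (étale locally, or Zariski locally on appropriate charts of the homogeneous space $L/K$), reducing $X$ to a product $(\text{open in } L/K)\times Z$ with $L/K$ smooth, and then invoking the standard fact that smoothness and normality of a product with a smooth factor are controlled by the other factor. Either route is routine once the local product structure is in place, and no further input beyond standard quotient-theoretic facts is required.
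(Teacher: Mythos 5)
Your argument is correct. Note, however, that the paper does not actually prove this proposition: it simply refers the reader to~\cite[Appendix 1]{Avdeev15} for a discussion and references, so there is no in-paper proof to compare against. Your route --- viewing $L\times Z \to L*_K Z$ as a $K$-torsor, descending smoothness and normality along a faithfully flat morphism and ascending them along a smooth surjection, then disposing of the factor $L$ --- is the standard argument and is essentially what the cited sources do. Two small points deserve care if you were to write this out in full. First, the existence of the quotient variety $(L\times Z)/K$ is not automatic for an arbitrary $K$-variety $Z$ (one needs, e.g., $Z$ quasi-projective with a $K$-linearized ample line bundle, or a cover by $K$-stable quasi-projective opens); but the proposition presupposes that $X\cong L*_K Z$ exists, so you may take this for granted. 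Second, the principal bundle $L\to L/K$, and hence $q$, is in general only locally trivial in the \'etale (or fppf) topology rather than the Zariski topology; this is harmless, since regularity and normality both descend along faithfully flat morphisms locally of finite presentation and ascend along smooth morphisms, but your parenthetical ``Zariski locally on appropriate charts'' should not be relied upon without further justification. With those caveats, the chain of equivalences $X$ smooth (resp.\ normal) $\iff$ $L\times Z$ smooth (resp.\ normal) $\iff$ $Z$ smooth (resp.\ normal) is complete and correct.
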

\begin{Proposition}\label{P:A3}
Let $P$ be a parabolic subgroup of $G$ and let $Z$ be a $P$-variety.
The variety $G*_PZ$ is complete if and only if $Z$ is complete. 
\end{Proposition}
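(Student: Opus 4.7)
The proposal is to exploit the natural $L$-equivariant projection $\pi : G*_P Z \to G/P$, $[g,z]\mapsto gP$, whose fiber over the base point $o = idP$ is (canonically isomorphic to) $Z$. Since $P$ is a parabolic subgroup, the flag variety $G/P$ is projective and in particular complete; this is the key external input that makes the equivalence possible and is what forces parabolicity into the hypothesis.

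For the ``only if'' direction I would argue as follows. The fiber $\pi^{-1}(o)$ is a closed subvariety of $G*_P Z$, and it is isomorphic to $Z$ via $z \mapsto [id, z]$. A closed subvariety of a complete variety is complete, so if $G*_P Z$ is complete, then so is $Z$. This step is essentially formal.

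For the ``if'' direction I would show that $\pi$ is a proper morphism, and then use that the composition of two proper morphisms is proper: the composition $G*_P Z \xrightarrow{\pi} G/P \to \mathrm{Spec}\,\C$ will then be proper, which is the definition of $G*_P Z$ being complete. To verify properness of $\pi$, I would use that $G \to G/P$ is a Zariski-locally trivial principal $P$-bundle (since $P$ is parabolic and $G/P$ is covered by translates of the big cell), so that $\pi$ is Zariski-locally trivial with fiber $Z$. Over each trivializing open set $U \subseteq G/P$, the restriction $\pi^{-1}(U) \cong U \times Z \to U$ is the second-factor-forgetting projection, which is proper because $Z$ is complete (properness of $Z \to \mathrm{Spec}\,\C$ is stable under base change). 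Since properness is local on the target, $\pi$ itself is proper.

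The main obstacle, if any, is the local-triviality statement: one must be sure that $G \to G/P$ is actually Zariski-locally trivial (not merely \'etale-locally trivial), so that the fiber-wise properness argument applies in the Zariski topology without any faithfully flat descent. This, however, is a well-known fact for parabolic subgroups of reductive groups (the big cell opposite to $P$ provides an explicit trivialization over a Zariski-open neighborhood of $o$, and its $L$-translates cover $G/P$). With this in hand, the rest of the argument is a routine chain of elementary properties of proper morphisms.
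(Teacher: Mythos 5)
Your argument is correct and is essentially the standard one: the paper itself gives no proof of Proposition~\ref{P:A3}, deferring to \cite[Appendix 1]{Avdeev15}, and the argument there is exactly your two-step scheme (fiber over the base point is a closed copy of $Z$; conversely, Zariski-local triviality of $G\to G/P$ over the big cell and its translates makes $\pi$ proper, and composing with the proper map $G/P\to \mathrm{Spec}\,\C$ finishes it). The only quibble is notational: the projection $G*_PZ\to G/P$ is $G$-equivariant, not ``$L$-equivariant'' as you wrote.
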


Let $Z$ be a $P$-variety, where $P$ is a parabolic subgroup.
Following Avdeev, we will say that $Z$ is a {\em spherical $P$-variety} if 
$Z$ is a spherical $L$-variety, where $L$ is a Levi subgroup of $P$.   
\begin{Proposition}\cite[Proposition 6.3]{Avdeev15}\label{P:Avdeev}
Let $Z$ be a $P$-variety and consider the $G$-variety $X=G*_PZ$.
Then we have 
\begin{enumerate}
\item[(a)] $Z$ is a spherical $P$-variety if and only if $X$ is a spherical $G$-variety.
\item[(b)] $Z$ is a wonderful $P$-variety if and only if $X$ is a wonderful $G$-variety. 
\end{enumerate}
\end{Proposition}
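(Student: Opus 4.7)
The plan is to exploit the natural $G$-equivariant projection $\pi: X = G *_P Z \to G/P$, whose fiber over the base point $o = eP$ is canonically identified with $Z$, and to reduce the sphericity question for the total space to a sphericity question on a single fiber.

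For part (a), the first step is to select a Borel subgroup $B^- \subset G$ opposite to a Borel $B \subset P$, so that the intersection $B^- \cap P$ equals a Borel subgroup $B_L^-$ of the Levi factor $L$, and so that $B^- \cdot o$ is open in $G/P$ (with $B^- \cdot o \cong B^-/B_L^- \cong U_P^-$, the unipotent radical of the opposite parabolic). Next, I would observe that $\pi^{-1}(B^- \cdot o)$ is open in $X$ and, by Proposition~\ref{P:A1} applied to the $B^-$-equivariant restriction of $\pi$, is isomorphic as a $B^-$-variety to $B^- *_{B_L^-} Z$. The crucial dimension count is then: for any point $z \in Z$ viewed inside the fiber $\pi^{-1}(o)$,
\[
\dim(B^- \cdot z) \;=\; \dim U_P^- + \dim(B_L^- \cdot z), \qquad \dim X \;=\; \dim U_P^- + \dim Z.
\]
Hence $B^- \cdot z$ is open in $X$ if and only if $B_L^- \cdot z$ is open in $Z$. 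This yields the equivalence of (a): $X$ admits an open $B^-$-orbit (so is spherical as a $G$-variety) if and only if $Z$ admits an open $B_L^-$-orbit (so is spherical as a $P$-variety in the sense defined above).

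For part (b), the plan is to combine part (a) with Propositions~\ref{P:A2} and \ref{P:A3}, which respectively transfer smoothness and completeness between $Z$ and $X$ in both directions. A wonderful $G$-variety is a smooth complete spherical $G$-variety whose boundary is a union of smooth prime $G$-stable divisors with normal crossings, meeting every $G$-orbit closure transversally, and whose number of $G$-stable prime divisors equals the rank of the spherical variety. Given sphericity from (a), the task becomes matching the boundary structures. The key observation is that $D \mapsto G *_P D$ sets up a bijection between $P$-stable prime divisors of $Z$ and $G$-stable prime divisors of $X$, and that $G$-orbits in $X$ correspond to $P$-orbits in $Z$ under restriction to $\pi^{-1}(o)$. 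Since the construction $G *_P(-)$ is Zariski-locally a product (as $G \to G/P$ is locally trivial), smoothness, normal crossings, and transversality properties translate faithfully across this bijection.

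The main obstacle I expect in (b) is the careful bookkeeping of the wonderful axioms under the bundle construction, especially matching the rank and the condition that boundary divisors meet orbit closures transversally; the cleanest approach will be to establish the functorial equivalence between $P$-equivariant geometry on $Z$ and $G$-equivariant geometry on $X$ first, and then verify each axiom of wonderfulness on either side corresponds under this equivalence. Part (a) is, by contrast, almost formal once the Borel $B^-$ and the fiber bundle description are set up.
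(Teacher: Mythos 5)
The paper offers no proof of this statement: it is quoted directly from Avdeev's paper (his Proposition 6.3), so there is no in-house argument to compare yours against. Judged on its own merits, your proof of (a) is the standard fiber-bundle argument and is essentially correct: with $B^-$ opposite to a Borel $B\subset P$ one has $B^-\cap P=B_L^-$, the orbit $B^-\cdot z$ for $z\in\pi^{-1}(o)$ fibers over $B^-\cdot o\cong U_P^-$ with fiber $B_L^-\cdot z$, and the dimension count does the rest. Two small points deserve to be made explicit. First, for the ``only if'' direction you must know that the open $B^-$-orbit of $X$ actually meets the fiber $\pi^{-1}(o)$; this holds because its image in $G/P$ is a dense $B^-$-orbit, hence equals $B^-\cdot o$, which contains $o$. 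Second, sphericity as defined in Section~\ref{S:Preliminaries} includes normality, so you should invoke Proposition~\ref{P:A2} already in part (a) to transfer normality between $Z$ and $X$ (you only cite it in (b)). For part (b) your outline --- the bijections $D\mapsto G*_PD$ between $P$-stable divisors of $Z$ and $G$-stable divisors of $X$, and $Y\mapsto Y\cap\pi^{-1}(o)$ between $G$-stable subvarieties of $X$ and $P$-stable subvarieties of $Z$, combined with local triviality of $G\to G/P$ --- is the right mechanism, but as written it is a sketch rather than a proof; since the paper explicitly declines to define wonderful varieties and never uses (b), this does not affect anything downstream.
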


Here, by a ``wonderful variety'' one refers to a certain smooth complete spherical variety with some special geometric properties. 
Since we are not going to need this notion for the results of our paper, we will not introduce it; we refer the reader to Avdeev's paper.
These basic results about homogeneous bundles lead us to the following criterion.

\begin{Proposition}\label{R:followsfromavdeev}
Let $X_{wB}$ be a smooth Schubert variety with a Grassmannian BP decomposition $w= vu$ ($v\in W^J,u\in W_J$).
Let $L$ denote the standard Levi subgroup of the stabilizer of $X_{vP_J}$, 
and let $K$ denote the stabilizer in $L$ of the point $vP_J \in X_{vP_J}$.
Then the preimage of $X_{vP_J}$ in $G/B$ is a spherical $L$-variety if and only if  $X_{uB}$ is a spherical $K$-variety.
\end{Proposition}

\begin{proof}
Since $X_{wB}$ is a smooth Schubert variety, 
 $X_{vP_J}$ is a smooth Schubert variety in $G/P_J$, and $X_{uB}$ is a smooth Schubert variety in $G/B$. 
We can assume that $X_{vP_J}$ is a homogeneous space for $L$, hence, it is of the form $L/K$ where $K$ is the stabilizer of 
the point $vP_J$ in $L$.  
Let us denote by $X$ the preimage of $X_{vP_J}$ under the canonical morphism $p_{B,P_J}: G/B \to G/P_J$. Then  
\[
X=p_{B,P_J}^{-1}(X_{vP_J}) = X_{vw_{0,P_J}B},
\]
where $w_{0,P_J}$ is the maximal element of $W_J$. 
Then $p_{B,P_J} \vert_X : X\to X_{vP_J}\cong L/K$ is an $L$-equivariant morphism.
By Proposition~\ref{P:A1}, we see that $X$ is isomorphic to $L*_KX_{uB}$. 
By Proposition~\ref{P:A2}, we see that $X$ is smooth. 
Finally, by Proposition~\ref{P:Avdeev}, $X$ is a spherical $L$-variety if and only if $X_{uB}$ is a spherical $K$-variety. 
\end{proof}

We proceed with a simple but a general lemma. 

\begin{Lemma} \label{L:simpleobservation}
Let $f: X\to Y$ be a $G$-equivariant morphism between two $G$-varieties $X$ and $Y$.
If $Z$ is a subvariety of $Y$, then $Stab_G(Z) = Stab_G( f^{-1}(Z))$.
\end{Lemma}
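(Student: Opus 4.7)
The plan is to verify the set equality $Stab_G(Z) = Stab_G(f^{-1}(Z))$ by establishing both inclusions directly from $G$-equivariance, without appealing to any geometric structure beyond the morphism $f$. The only input is the identity $f(g \cdot x) = g \cdot f(x)$ together with the set-theoretic fact that preimages commute with arbitrary intersections and unions.

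For the forward inclusion $Stab_G(Z) \subseteq Stab_G(f^{-1}(Z))$, I would pick $g \in Stab_G(Z)$ and an arbitrary $x \in f^{-1}(Z)$. Equivariance gives $f(g \cdot x) = g \cdot f(x) \in g \cdot Z = Z$, and hence $g \cdot x \in f^{-1}(Z)$. So $g \cdot f^{-1}(Z) \subseteq f^{-1}(Z)$, and applying the same reasoning to $g^{-1}$ promotes this inclusion to the equality $g \cdot f^{-1}(Z) = f^{-1}(Z)$.

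For the reverse inclusion, let $g \in Stab_G(f^{-1}(Z))$. The key identity I want to invoke is $f(f^{-1}(Z)) = Z$, which holds whenever $Z$ is contained in the image of $f$. Combining it with equivariance produces
\[
g \cdot Z \;=\; g \cdot f(f^{-1}(Z)) \;=\; f(g \cdot f^{-1}(Z)) \;=\; f(f^{-1}(Z)) \;=\; Z,
\]
so $g \in Stab_G(Z)$, finishing both inclusions.

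The only (minor) obstacle in the argument is the identity $f(f^{-1}(Z)) = Z$, which presupposes $Z \subseteq f(X)$; without this, the reverse inclusion can degenerate (for instance, if $f^{-1}(Z) = \emptyset$ then $Stab_G(f^{-1}(Z)) = G$). In every use of the lemma in the paper, $f$ is a canonical projection $p_{{\scriptscriptstyle B,Q}}: G/B \to G/Q$ between flag varieties, which is surjective, so this hypothesis is automatic and requires no further comment.
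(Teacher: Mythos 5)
Your proof is correct and takes essentially the same route as the paper's: both inclusions follow from equivariance, with the reverse inclusion resting on the fact that every point of $Z$ has a preimage (your identity $f(f^{-1}(Z))=Z$ is just the set-level form of the paper's choice of $a\in f^{-1}(z)$). The only difference is that you flag the surjectivity hypothesis explicitly, which the paper leaves implicit; as you note, it is automatic for the projections $p_{B,Q}$ where the lemma is applied.
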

\begin{proof}

Let $g$ be an element from $Stab_G(Z)$, and let $y$ be an element from $f^{-1}(Z)$. 
To check that $gy$ lies in $f^{-1}(Z)$, it suffices to check that $f(gy)\in Z$. Clearly, this is true by the equivariance of $f$. 
This proves the inclusion ``$\subseteq$''.

Conversely, let $g$ be an element of the group $Stab_G( f^{-1}(Z))$. 
We will show that $gz\in Z$ for every $z\in Z$. 
Let $a$ be an element from $f^{-1}(z)$. 
Then $g\cdot z = g\cdot f(a)  = f( g a)$. 
But $ga$ is an element of $f^{-1}(Z)$, therefore, $f(ga)\in Z$. 

\end{proof}

\begin{Remark}\label{R:HongMok}
Let $X$ be a smooth Schubert variety in a Grassmannian $G/P$.
We assume that $P$ is a maximal parabolic subgroup corresponding to a long root. 
Then by~\cite[Proposition 3.2]{HongMok}, we know that $X$ is a projective homogeneous space for a Levi subgroup $H$ of $G$. 
In particular, it is of the form $H/K$ for some parabolic subgroup $K$ of $H$. 
We will show that $K$ is a maximal parabolic subgroup of $H$.
Indeed, a projective homogeneous space is a Grassmannian if and only if its Picard number is 1. 
There is a surjective map of abelian groups \hbox{$\text{Pic}(G/P)\to \text{Pic}(X)$}, see~\cite{Mathieu}. 
Since $X$ is a smooth Schubert subvariety, $\text{Pic}(X)$ is a free abelian group, and furthermore, the former group is 
a free abelian group of rank 1.  
Therefore, the rank of $\text{Pic}(X)$ is also 1, hence, $X$ is a Grassmannian. 
\end{Remark}

\section{Smooth Schubert Divisors}\label{S:Divisors}

Let $w$ be an element from $W$.
A (right) BP decomposition of $w$, $w=vu$ ($v\in W^J,u\in W_J$), is called a {\em right chain BP decomposition} if 
$[id,v]\cap W^J$ is a chain. 
The {\em left chain BP decomposition} is defined as follows. 
Let $w=uv$ ($u\in W_J,v\in W^J$)  be a left parabolic decomposition of $w$ with respect to $J\subset S$.
Recall that $w=uv$ is a left BP decomposition if the Poincar\'e polynomial of $X_{Bw}$ is equal to the product of 
the Poincar\'e polynomials of $X_{Bu}$ and $X_{P_Jv}$. 
If, in addition, $[id,v]\cap W^J$ is a chain, then $w=uv$ is said to be a {\em left chain BP decomposition}.

\begin{Remark}\label{R:leftright}
The left (chain) BP decomposition of $w$ is a right (chain) BP decomposition of $w^{-1}$. 
Note also that $X_{wB}$ is a (rationally) smooth Schubert variety if and only if $X_{w^{-1}B}$ is a (rationally) smooth Schubert variety in $G/B$,
see~\cite[Corollary 4]{Carrell11}.
\end{Remark}

In types $\text{A},\text{B},\text{C}$, and $\text{G}_2$, every rationally smooth element $w\in W$ has a chain BP decomposition, 
but this is not true in types $\text{D},\text{E}_6,\text{E}_7,\text{E}_8$, and $\text{F}_4$. 
Nevertheless, in every type other than $\text{E}$, 
a rationally smooth element is maximal or has a chain decomposition.
For this reason we split our next result into two theorems. 
The first theorem involves the types which are not of type $\text{E}$.

\begin{Remark}
The conclusion of Theorem~\ref{T:part1} is not exclusive in the sense that there are smooth Schubert varieties $X_{wB}$ 
where $w$ is the maximal element of a parabolic subgroup and still $X_{wB}$ has a smooth Schubert divisor. 
For example, the full flag variety $G/B$ has a smooth Schubert divisor if and only if $G\in \{\mathbf{SL}_n,\mathbf{Sp}_n\}$, see~\cite[Proposition 4.12]{Carrell94}.
\end{Remark}

\begin{Theorem}\label{T:part1}
Let $X_{wB}$ be a smooth Schubert variety in $G/B$, where $G$ is not of type $\text{E}_i$, where $i\in \{6,7,8\}$.
Then $w\in W$ is a maximal element of a parabolic subgroup, hence $X_{wB}$ is a homogeneous spherical $L(w)$-variety, or 
$X_{wB}$ has a smooth Schubert divisor. 
\end{Theorem}

\begin{proof}
In the types that we listed in our hypothesis, a rationally smooth element $w\in W$ is either a maximal element of a parabolic subgroup, 
or it is has a chain decomposition, see~\cite[Theorem 4.3]{Slofstra15}. 
If $w$ is a maximal element of a parabolic subgroup, then we are in the situation of Proposition~\ref{P:U_w^-}, 
hence $X_{wB}$ is a homogeneous spherical $L(w)$-variety.
So, without loss of generality, we assume $X_{wB}$ is not a homogeneous space.

Let $X_{wB}$ be a smooth Schubert variety, 
where $w$ has a right chain BP decomposition $w= vu$ where $v\in W^J,u\in W_J$. 
Let $\pi$ denote the restriction of the map $p_{B,P_J} : G/B \to G/P_J$ onto $X_{wB}$. 
Then we know that $\pi(X_{wB})=X_{vP_J}$ is a smooth Schubert variety as well. 
Since $[id, v]\cap W^J$ is a chain, $X_{vP_J}$ is a projective space. 
As a projective space, $X_{vP_J}$ has a unique smooth Schubert divisor, $X_{v'P_J}\subset X_{vP_J}$, where $sv'=v$ with $\ell(v)=\ell(v')+1$. 
Note that $v'$ is an element of $W^J$, and that $X_{v'P_J}$ is a projective space as well. 
At the same time, since $\pi: X_{wB}\to X_{vP_J}$ is a faithfully flat and smooth morphism, 
the pull back of $X_{v'P_J}$ in $X_{wB}$ is a smooth Schubert divisor, $X_{w_1B} \subset X_{wB}$. 
This finishes the proof of our assertion.
\end{proof}

\begin{Remark}
In the proof of Theorem~\ref{T:part1}, since $\pi$ is $P(w)$-equivariant, we have $w_1= sw = s vu = v'u$. 
Notice that we can repeat this argument with a right chain BP decomposition of $w^{-1}$ to find a smooth Schubert divisor $X_{w_2^{-1}B}$
of $X_{w^{-1}B}$ with $s'w_2^{-1}=w^{-1}$ for some $s'\in D_L(w^{-1})$. 
Then $X_{w_2B}$ is a smooth Schubert divisor of $X_{wB}$, and, furthermore, $s'\in D_R(w)$.
In other words, when $X_{wB}$ is smooth and $w$ is not a maximal element of a parabolic subgroup, we have shown 
that it always contains smooth Schubert divisors $X_{w_1B}$ and $X_{w_2B}$ with 
$w_1=sw$ and $w_2 = ws'$, though it is possible that $w_1=w_2$. 
\end{Remark}

The following lemma, whose proof can be found in~\cite[Lemma 4.8]{RichmondSlofstra}, will be useful.

\begin{Lemma}\label{L:Lemma4.8}
Let $K$ be a subset of $S$, and let $v$ be an element from $W^K$. 
Let $G_{S(v)}$ be a Levi subgroup of $P_{S(v)}$, and let $P_{S(v),S(v)\cap K}$ 
be the parabolic subgroup of $G_{S(v)}$ that is generated by $S(v)\cap K$. 
Finally let $X_{vP_{S(v),S(v)\cap K}}$ denote the Schubert variety in $G_{S(v)}/P_{S(v),S(v)\cap K}$ that is indexed by $v \in W_{S(v)}^{S(v)\cap K}$.
Then the canonical inclusion map $G_{S(v)} / P_{S(v),S(v)\cap K} \hookrightarrow G/K$ induces an isomorphism 
$X_{vP_{S(v),S(v)\cap K}}\longrightarrow X_{vP_K}$.
\end{Lemma}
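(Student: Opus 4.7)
The plan is to exhibit an explicit bijective morphism between the two Schubert varieties by tracking their Bruhat cell decompositions, and then to promote this bijection to an isomorphism using normality. First, I would verify that the inclusion $G_{S(v)} \hookrightarrow G$ descends to a well-defined closed immersion $\bar\iota: G_{S(v)}/P_{S(v), S(v)\cap K} \hookrightarrow G/P_K$. The point of choosing $P_{S(v),S(v)\cap K}$ to be generated precisely by $S(v)\cap K$ is that it is exactly $G_{S(v)} \cap P_K$, which ensures that $\bar\iota$ is well-defined and injective on points.

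Next, I would identify the $T$-fixed points of both Schubert varieties under consideration. The $T$-fixed points of $X_{vP_K}$ are indexed by $\{u \in W^K : u \leq v\}$, while those of $X_{vP_{S(v), S(v)\cap K}}$ are indexed by $\{u \in W_{S(v)}^{S(v)\cap K} : u \leq v\}$. By the Subword Property of the Bruhat--Chevalley order, every $u$ with $u \leq v$ satisfies $S(u) \subseteq S(v)$; hence $u$ actually lies in $W_{S(v)}^{S(v)\cap K}$, and the two $T$-fixed-point sets agree. This also means that the Bruhat--Chevalley order on the two indexing sets agree below $v$.

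The key step is then to show that $\bar\iota$ sends each Bruhat cell $B^{S(v)} u P_{S(v), S(v)\cap K}/P_{S(v), S(v)\cap K}$ \emph{isomorphically} onto the Bruhat cell $BuP_K/P_K$ for each $u\leq v$. Both cells are affine spaces which admit an explicit root subgroup parameterization: the cell in $G_{S(v)}/P_{S(v), S(v)\cap K}$ is parameterized by the root subgroups indexed by $\Phi^+_{S(v)} \cap u(\Phi^-_{S(v)} \setminus \Phi^-_{S(v)\cap K})$, while the cell in $G/P_K$ is parameterized by the root subgroups indexed by $\Phi^+ \cap u(\Phi^- \setminus \Phi^-_K)$. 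Since $u \in W_{S(v)}$ preserves the sub-root system $\Phi_{S(v)}$, and since $K \cap S(v) = S(v) \cap K$ implies $\Phi^-_K \cap \Phi_{S(v)} = \Phi^-_{S(v)\cap K}$, the two parameterizing root sets agree. Thus $\bar\iota$ induces an isomorphism on each cell.

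Finally, I would glue: $\bar\iota$ restricts to a bijective morphism $X_{vP_{S(v),S(v)\cap K}} \to X_{vP_K}$ between normal projective varieties that is an isomorphism on each stratum of a stratification by locally closed subvarieties, in particular an isomorphism on the dense open Bruhat cell. By Zariski's main theorem, this finite birational bijection between normal varieties is an isomorphism. The main obstacle is the root-subgroup bookkeeping in the third step: the cleanest way to handle it is to observe that since $u \in W_{S(v)}$, the action of $u$ does not mix $\Phi_{S(v)}$ with its complement, so the two parameterizations reduce to the same product of root subgroups inside the Levi $G_{S(v)}$.
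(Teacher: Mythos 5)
Your argument is correct, and it is worth noting that the paper itself gives no proof of this lemma: it simply cites Richmond--Slofstra, so you have supplied a self-contained argument where the paper defers to the literature. Your route is the standard one and all the key verifications check out: $P_{S(v),S(v)\cap K}=G_{S(v)}\cap P_K$ makes $\bar\iota$ well defined; the Subword Property gives $S(u)\subseteq S(v)$ for $u\leq v$, so the indexing sets of cells coincide; and for $u\in W_{S(v)}\cap W^K$ one has $N(u)\subseteq\Phi^+_{S(v)}$, so both cells are parameterized by the same root subgroups $U_\alpha$, $\alpha\in N(u)$, sitting inside $G_{S(v)}$. One small streamlining: once you know $\bar\iota$ is a closed immersion of the full homogeneous spaces, the final appeal to Zariski's main theorem is redundant --- the restriction of a closed immersion to the closed subvariety $X_{vP_{S(v),S(v)\cap K}}$ is automatically an isomorphism onto its image, and that image is a closed irreducible subvariety of $G/P_K$ containing the dense cell $BvP_K/P_K$ (by your cell computation at $u=v$), hence equals $X_{vP_K}$. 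Conversely, if you prefer not to justify that $\bar\iota$ is a closed immersion at the level of homogeneous spaces, then your ZMT argument carries the load, but you should make explicit that normality of Schubert varieties is being invoked as a known input.
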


\begin{Proposition}\label{P:reduced}
We maintain the notation from Lemma~\ref{L:Lemma4.8}. 
Let $L$ denote the standard Levi subgroup of $Stab_{G_{S(v)}}(X_{S(v),S(v)\cap K})$, and let $L(vw_{0,K})$ denote the standard Levi subgroup of 
$Stab_G(X_{vP_K})$. 
If $X_{vP_{S(v),S(v)\cap K}}$ is a spherical $L$-variety, then $X_{vP_K}$ is a spherical $L(vw_{0,K})$-variety in $G/P_K$. 
\end{Proposition}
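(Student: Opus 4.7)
The plan is to transport the dense Borel orbit on $X_{vP_{S(v),S(v)\cap K}}$ to one on $X_{vP_K}$ through the isomorphism of Lemma~\ref{L:Lemma4.8}, once I have shown that $L$ sits inside $L(vw_{0,K})$ in a way compatible with the standard choice of torus and Borel.

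First, the inclusion $G_{S(v)}/P_{S(v),S(v)\cap K}\hookrightarrow G/P_K$ is $G_{S(v)}$-equivariant, so the isomorphism $f : X_{vP_{S(v),S(v)\cap K}} \longrightarrow X_{vP_K}$ of Lemma~\ref{L:Lemma4.8} is $L$-equivariant, where we view $L$ as a subgroup of $G$ via $L \subseteq G_{S(v)} \subseteq G$.

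Next I will prove the containment $L \subseteq L(vw_{0,K})$. By Lemma~\ref{L:simpleobservation}, the stabilizer of $X_{vP_{S(v),S(v)\cap K}}$ in $G_{S(v)}$ equals the stabilizer of its preimage $X_{vw_{0,S(v)\cap K} B_{S(v)}}$ under the canonical projection $G_{S(v)}/B_{S(v)}\to G_{S(v)}/P_{S(v),S(v)\cap K}$. Applying Lemma~\ref{L:evident} inside $G_{S(v)}$, the Levi $L$ is generated by $T$ together with the simple reflections $s_\alpha$ for $\alpha \in S(v) \cap N(vw_{0,S(v)\cap K})$; because $vw_{0,S(v)\cap K} \in W_{S(v)}$ we have $N(vw_{0,S(v)\cap K}) \subseteq \Phi_{S(v)}^+$, so this set of simple roots equals $N_\Delta(vw_{0,S(v)\cap K})$. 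On the other hand, $L(vw_{0,K})$ is generated by $T$ and $s_\alpha$ for $\alpha \in N_\Delta(vw_{0,K})$. Since $v \in W^K$ implies $v \in W^{S(v)\cap K}$, equation~(\ref{A:wg}) gives
\[
N(vw_{0,S(v)\cap K}) = N(v)\sqcup v\bigl(N(w_{0,S(v)\cap K})\bigr) \quad \text{and} \quad N(vw_{0,K}) = N(v)\sqcup v\bigl(N(w_{0,K})\bigr).
\]
Using $N(w_{0,J}) = \Phi_J^+$ for $J\subseteq S$, the inclusion $\Phi_{S(v)\cap K}^+ \subseteq \Phi_K^+$ yields $N(vw_{0,S(v)\cap K}) \subseteq N(vw_{0,K})$, and intersecting with $\Delta$ gives $N_\Delta(vw_{0,S(v)\cap K}) \subseteq N_\Delta(vw_{0,K})$. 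Hence $L \subseteq L(vw_{0,K})$.

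To conclude, assume $X_{vP_{S(v),S(v)\cap K}}$ is $L$-spherical, so the Borel $B_L := B\cap L$ of $L$ has a dense orbit on it. By $L$-equivariance of $f$, the same subgroup $B_L$ has a dense orbit on $X_{vP_K}$. Because both $L$ and $L(vw_{0,K})$ are standard Levi subgroups containing $T$, we have $B_L = B\cap L \subseteq B\cap L(vw_{0,K})$, and the right-hand side is a Borel subgroup of $L(vw_{0,K})$. This larger Borel therefore inherits a dense orbit on $X_{vP_K}$, proving $L(vw_{0,K})$-sphericity. The main bookkeeping obstacle is verifying the inversion-set comparison across the embedding $G_{S(v)}\hookrightarrow G$, i.e., that the description of $L$ via simple roots of $G_{S(v)}$ matches the description of $L(vw_{0,K})$ via simple roots of $G$; this is exactly where the identity $N(u)\subseteq\Phi_{S(v)}^+$ for $u\in W_{S(v)}$, combined with~(\ref{A:wg}) applied to the parabolic decomposition through $v$, is doing the essential work.
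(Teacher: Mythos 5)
Your proof is correct and follows essentially the same route as the paper: transfer sphericity through the $G_{S(v)}$-equivariant isomorphism of Lemma~\ref{L:Lemma4.8} and then enlarge the acting group via the containment $L\subseteq L(vw_{0,K})$. The only difference is that the paper dismisses that containment as ``evident,'' whereas you verify it carefully via Lemma~\ref{L:simpleobservation}, Lemma~\ref{L:evident}, and equation~(\ref{A:wg}); your computation is correct and is a worthwhile elaboration, not a change of method.
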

\begin{proof}
Since $G_{S(v)}$ is a subgroup of $G$, by Lemma~\ref{L:Lemma4.8}, 
$i: G_{S(v)} / P_{S(v),S(v)\cap K} \hookrightarrow G/K$ is a $G_{S(v)}$-equivariant embedding. 
In particular, the stabilizer subgroup $Stab_{G_{S(v)}}(X_{vP_{S(v),S(v)\cap K}})$ is contained in $Stab_G(X_{vP_K})$. 
Clearly, if $X_{vP_{S(v),S(v)\cap K}}$ is a spherical $L$-variety, then $X_{vP_K}$ is a spherical $L$-variety. It is also evident that we have the inclusion $L\subseteq L(vw_{0,K})$. 
Thus, $X_{vP_K}$ is a spherical $L(vw_{0,K})$-variety as well.
\end{proof}

Next, to handle the type $\text{E}$ cases, we will review the relevant part of~\cite[Theorem 4.3]{Slofstra15}.
We label the roots of the Dynkin diagram of $\text{E}_8$ as in Figure~\ref{F:E8}:
\begin{figure}[h]
\begin{center}
\begin{tikzpicture}[scale=.39pt]
\node at (-6,0) (a8) {$8$};
\node at (-4,0) (a7) {$7$};
\node at (-2,0) (a6) {$6$};
\node at (0,0) (a5) {$5$};
\node at (2,0) (a4) {$4$};
\node at (4,0) (a3) {$3$};
\node at (6,0) (a2) {$2$};
\node at (2,2) (a1) {$1$};

\draw[-, thick] (a8) to (a7);
\draw[-, thick] (a7) to (a6);
\draw[-, thick] (a6) to (a5);
\draw[-, thick] (a5) to (a4);
\draw[-, thick] (a4) to (a3);
\draw[-, thick] (a3) to (a2);
\draw[-, thick] (a4) to (a1);
\end{tikzpicture}
\caption{The labeling of the nodes of $\text{E}_8$.}\label{F:E8}
\end{center}
\end{figure}

Let $S_k$ denote the set $\{s_1,\dots , s_k\}$, where $s_i$ is the simple reflection corresponding to the root labeled $i$ 
in Figure~\ref{F:E8}. Then the Weyl group $W$ of $\text{E}_8$ is generated by $S_8$. 
We will view $\text{E}_6$ and $\text{E}_7$ as subgroups of $\text{E}_8$, where the Weyl group of $\text{E}_6$
is given by the parabolic subgroup $W_{S_6}$ in $W$, 
and the Weyl group of $\text{E}_7$ is given by the parabolic subgroup $W_{S_7}$ in $W$. 
Let $J_k$ denote $S_k \setminus \{s_2\}$. 
Let $\tilde{u}_k$ denote the maximal element of $W_{J_k}$, and let $\tilde{v}_k$ denote the maximal element of $W_{S_k}^{J_k}$.
Finally, we set $w_{kl}:= \tilde{v}_l \tilde{u}_k$.

\begin{Theorem}\label{T:part2}
Let $X_{wB}$ be a smooth Schubert variety in $G/B$, where $G$ is of type $\text{E}_i$, where $i\in \{6,7,8\}$.
Then $w\in W$ is a maximal element of a parabolic subgroup, hence $X_{wB}$ is $L(w)$-spherical, or 
$X_{wB}$ has a smooth Schubert divisor. 
\end{Theorem}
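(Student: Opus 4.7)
The plan is to extend the dichotomy established in Theorem~\ref{T:part1} to the type~$\text{E}$ case by incorporating the exceptional smooth elements just set up above the statement. Since $\text{E}_i$ is simply laced, Peterson's theorem (recalled in the excerpt) tells us that smoothness coincides with rational smoothness, so it suffices to treat rationally smooth $w$. Invoking Slofstra's classification~\cite[Theorem~4.3]{Slofstra15}, every such $w$ falls into one of three mutually exhaustive families: (a) $w$ is maximal in some standard parabolic subgroup $W_J$; (b) $w$ admits a (right or left) chain BP decomposition; or (c) $w$ is one of the exceptional elements $w_{kl}=\tilde{v}_l\tilde{u}_k$ recorded in the paragraph preceding the theorem.

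Case (a) is immediate from Proposition~\ref{P:U_w^-}: the hypothesis there is satisfied, and $X_{wB}$ is a homogeneous, hence spherical, $L(w)$-variety, which gives the first alternative of the conclusion. Case (b) is dispatched verbatim by the argument in the proof of Theorem~\ref{T:part1}: the BP projection $\pi\colon X_{wB}\to X_{vP_J}$ is a smooth, faithfully flat fibration onto the projective space $X_{vP_J}$ (thanks to the chain condition on $[id,v]\cap W^J$), and the pullback of the unique smooth Schubert hyperplane in $X_{vP_J}$ is a smooth Schubert divisor of $X_{wB}$. If only a left chain BP decomposition is available, one first applies Remark~\ref{R:leftright} to pass to $w^{-1}$ and then inverts.

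The remaining case (c) requires explicitly constructing a smooth Schubert divisor in $X_{w_{kl}B}$. The plan is first to shrink the ambient group to the Levi $G_{S(w_{kl})}$ via Lemma~\ref{L:Lemma4.8}, so that we may work inside the type~$\text{E}$ subsystem generated by the support of $w_{kl}$. Within this subgroup, a natural codimension-one candidate is obtained by replacing $\tilde{v}_l$ with $\tilde{v}_{l-1}$ (equivalently, removing a single generator from the head of the reduced expression $\tilde{v}_l\tilde{u}_k$), producing an element $w'\lessdot w_{kl}$ that still respects the parabolic decomposition with respect to $J_k$. Using the explicit descriptions of $\tilde{v}_l$ as the maximal element of $W_{S_l}^{J_l}$ and of $\tilde{u}_k$ as the maximal element of $W_{J_k}$, one verifies that $w'$ either lies again in the exceptional family (as $w_{k,l-1}$) or now admits a chain BP decomposition, and in either case is smooth by Peterson's theorem. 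A descending induction on $(k,l)$ terminates at base cases absorbed by (a) or (b).

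The main obstacle will be the combinatorial verification in case (c): one must check, going through the few configurations that occur inside $\text{E}_6,\text{E}_7,\text{E}_8$, that the chosen corank-one subword of $w_{kl}$ remains on Slofstra's rationally smooth list and still indexes a Schubert subvariety of the expected codimension. Because the exceptional family is finite and explicitly described in terms of the labeling in Figure~\ref{F:E8}, this amounts to a bounded Dynkin-combinatorial case check rather than any fundamentally new argument.
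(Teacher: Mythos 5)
Your cases (a) and (b) match the strategy of Theorem~\ref{T:part1} and are fine, but case (c) --- the only genuinely new situation in type $\text{E}$ --- is where your construction breaks down. The element $w_{k,l-1}=\tilde{v}_{l-1}\tilde{u}_k$ is not a codimension-one subword of $w_{kl}=\tilde{v}_l\tilde{u}_k$: since $\ell(\tilde{v}_l)=|\Phi^+_{S_l}|-|\Phi^+_{J_l}|$, the length drop $\ell(\tilde{v}_l)-\ell(\tilde{v}_{l-1})$ is large (for instance $16-8=8$ when passing from $l=6$ to $l=5$), so replacing $\tilde{v}_l$ by $\tilde{v}_{l-1}$ is emphatically not ``equivalently, removing a single generator from the head of the reduced expression.'' If you instead literally delete one generator from the head of a reduced word for $\tilde{v}_l$, you obtain some $v'\lessdot \tilde{v}_l$ that is not $\tilde{v}_{l-1}$, and you would still owe a proof that $v'\tilde{u}_k$ indexes a \emph{smooth} divisor; you yourself flag this verification as ``the main obstacle'' and leave it as an unexecuted case check. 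You also do not treat the inverse elements $w_{kl}^{-1}$, which Slofstra's list includes. So as written the proposal does not establish the theorem in the exceptional case.

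The missing idea is that no new divisor construction inside $\text{E}_k$ is needed at all. The paper's route in case (c) is to note that the support of $w_{kl}$ lies in the subdiagram $J_k$, which is of type $\text{D}_{k-1}$; by Lemma~\ref{L:Lemma4.8}, $X_{w_{kl}B}$ is then identified with a smooth Schubert variety in the flag variety of the corresponding Levi subgroup $G_{J_k}$ of type $\text{D}_{k-1}$, where Theorem~\ref{T:part1} already gives the dichotomy (maximal element of a parabolic, or existence of a smooth Schubert divisor); the case $w=w_{kl}^{-1}$ is handled symmetrically using Remark~\ref{R:leftright}. In short, the exceptional type-$\text{E}$ elements are disposed of by restriction of support to a type-$\text{D}$ subsystem, not by a descending induction on $(k,l)$ within type $\text{E}$.
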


\begin{proof}

A smooth element $w\in W$ is either a maximal element of a parabolic subgroup, 
or it is an element of $\{w_{kl}, w_{kl}^{-1}:\ 5\leq l < k \leq 8\}$, see~\cite[Theorem 4.3]{Slofstra15}. 
If $w$ is a maximal element of a parabolic subgroup, then we are in the situation of Proposition~\ref{P:U_w^-}, 
hence $X_{wB}$ is a spherical $L(w)$-variety.
So, without loss of generality, we assume that $w\in \{w_{kl}, w_{kl}^{-1}:\ 5\leq l < k \leq 8\}$.

First we assume that $w= w_{kl}=\tilde{v}_l \tilde{u}_k$ for some $5\leq l < k \leq 8$. 
Since $\tilde{v}_l$ is the maximal element of $W_{S_l}^{J_l}$ and $\tilde{u}_k$ is the maximal element of $W_{S_k}$,
the support of $w_{kl}$ is contained in $J_k$. 
Let $G_{J_k}$ denote the reductive subgroup in $G$ that corresponds to the sub-Dynkin diagram $J_k$. 
By Lemma~\ref{L:Lemma4.8}, $X_{w_{kl}B}$ is a smooth $L(w_{kl})$-variety in $G/B$ if and only if 
$X_{w_{kl}B'}$ is a smooth Schubert variety in $G_{J_k}/B'$, where $B'=B\cap G_{J_k}$.
But $G_{J_k}$ is a semisimple algebraic group of type $\text{D}_{k-1}$, 
and by Theorem~\ref{T:part1} we know that every smooth Schubert variety in type D is either a homogeneous space, hence it is spherical,
or it has a smooth Schubert divisor.  
Thus, in this case our assertion is proved. For, $w=w_{kl}^{-1}$ the arguments are similar, so the proof of our theorem is finished.

\end{proof}

\noindent Now Theorem \ref{T:maintheorem} follows immediately from Theorem \ref{T:part1} and Theorem \ref{T:part2}.

\section{Spherical Schubert Varieties of Type $\text{A}_3$}\label{S:FinalA3}

In this section of our paper, we will analyze the sphericality of 
Schubert varieties in the full flag varieties of $\mathbf{SL}_3$ and $\mathbf{SL}_4$. 
First we have a general remark.

\begin{Remark} \label{R:simple1}
Let $P$ be a parabolic subgroup in $G$. 
For every $w$ from $W^P$, the canonical projection map $p_{B,P}\vert_{X_{wB}} : X_{wB} \to X_{wP}$ is a birational morphism. 
In particular, since $p_{B,P}\vert_{X_{wB}}$ is equivariant with respect to $L(w)$,
$X_{wB}$ is a spherical $L(w)$-variety if and only if $X_{wP}$ is a spherical $L(w)$-variety. 
\end{Remark}

Another general fact that we will use in the sequel without further mentioning is that the Schubert varieties which have dimension $\leq 1$ are homogeneous, hence, spherical varieties.

\subsection{The case of $\mathbf{SL}_3$.}

Let $G$ denote $\mathbf{SL}_3$, and let $B$ denote the Borel subgroup of upper triangular matrices in $G$. 
In this case, we have six Schubert varieties $X_{123B}, X_{213B},X_{132B},X_{231B},X_{312B},X_{312B}$.
For $w\in \{123,213,132,321\}$, the Schubert variety $X_{wB}$ is $L(w)$-spherical.
Indeed, $X_{123B}$ is a point, $X_{213B}$ and $X_{132B}$ are $B$-stable rational curves, and $X_{321B}$ is the whole 
flag variety $G/B$. 
For $i\in \{1,2\}$, let $P_i$ denote the minimal parabolic subgroup corresponding to $s_i$. 
Then, it is easy to verify that $X_{231P_1} = G/P_1$ and $X_{312P_2} = G/P_2$. 
Since $p_{B,P_1} : G/B \to G/P_1$ is a $G$-equivariant projection, and since $X_{231B}$ is birationally isomorphic to $X_{231P_1}$, 
$X_{231B}$ is $L(231)$-spherical if and only if $X_{231P_1}$ is $L(231)$-spherical.

It follows from item 1 in Remark~\ref{R:Littelmann's} that $G$ acts spherically 
on $G/P_I\times G/P_J$ for every pair of subsets $(I,J)$ from $S$ with $I \neq \emptyset$. 
Since $L(231)$ corresponds to the set of simple reflections $I=\{s_{1}\}$, $G/P_1$ is a spherical $L(231)$-variety. Hence, $X_{231P_1}$ is a spherical $L(231)$-variety. 
The proof of the fact that $X_{312B}$ is a spherical $L(312B)$-variety is similar, so, we omit it. 
We summarize these observations as follows.

\begin{Theorem}
Let $G$ denote $\mathbf{SL}_3$, and let $B$ denote the Borel subgroup of upper triangular matrices in $G$. 
Then, every Schubert variety $X_{wB}$ in $G/B$ is a spherical $L(w)$-variety. 
\end{Theorem}

\subsection{The case of $\mathbf{SL}_4$.}

Let $G$ denote $\mathbf{SL}_4$, and let $B$ denote the Borel subgroup of upper triangular matrices in $G$. We will follow our notation from Example~\ref{E:S4}.
Then the set of simple Coxeter generators is given by $S=\{s_1,s_2,s_3\}$. 
It follows from items 1 and 2 in Remark~\ref{R:Littelmann's} that $G$ acts spherically 
on $G/P_I\times G/P_J$ for every pair of subsets $(I,J)$ from $S$ 
with $|I| + |J| \geq 3$.
In particular, if $L$ is a Levi subgroup of $G$ such that $T\subsetneq L$, then 
$L$ acts spherically on $G/P_J$ for every $J\subseteq S$ with $|J|\geq 2$. 
Hence, if $X_{wP_J}$ is a Schubert variety in $G/P_J$, where $|J| =2$, then 
it is a spherical $L(w)$-variety, where $L(w)$ is the Levi subgroup of $Stab_G(X_{wP_J})$.
We are now ready to prove that every Schubert variety in any partial variety of $\mathbf{SL}_4$ is spherical. 

\begin{Theorem}\label{T:allspherical}
Let $X_{wP}$ be a Schubert variety in $G/P$, where $G=\mathbf{SL}_4$ and $P$ is a parabolic subgroup of $G$.
If $L(w)$ is a Levi subgroup of $Stab_G(X_{wP})$, then $X_{wP}$ is a spherical $L(w)$-variety. 
\end{Theorem}

\begin{proof}

The main idea of our proof which actually applies in the broader setup of spherical varities is that 
if $X_{wB}$ ($w\in W$) is a spherical $L(w)$-variety, then any Schubert subvariety $X_{vB}\subseteq X_{wB}$
which is also an $L(w)$-variety, is spherical as well. 
In this regard, we will prove that the Schubert varieties corresponding to the top elements of the posets in Figure~\ref{F:S4-2} are spherical with respect to the Levi subgroups of their stabilizing parabolic subgroups.
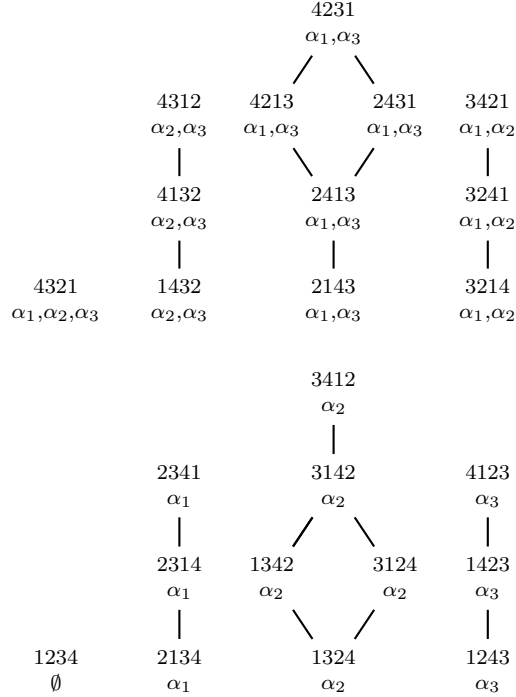
\begin{figure}[htp]
\begin{center}
\begin{tikzpicture}[scale=.41pt]

\begin{scope}[xshift=-6cm, yshift=6cm]
\node at (0,0) (g) {${4321}\atop {\alpha_1,\alpha_2,\alpha_3}$};
\end{scope}

\begin{scope}[xshift=-2cm, yshift=6cm]
\node at (0,6) (f1) {${4312}\atop{\alpha_2,\alpha_3}$};
\node at (0,3) (e1) {${4132}\atop{\alpha_2,\alpha_3}$};
\node at (0,0) (d1) {${1432}\atop{\alpha_2,\alpha_3}$};
\draw[-, thick] (d1) to (e1);
\draw[-, thick] (e1) to (f1);
\end{scope}

\begin{scope}[xshift=3cm, yshift=6cm]
\node at (0,9) (f2) {${4231}\atop{\alpha_1,\alpha_3}$};
\node at (-2,6) (e2) {${4213}\atop{\alpha_1,\alpha_3}$};
\node at (2,6) (e4) {${2431}\atop{\alpha_1,\alpha_3}$};
\node at (0,3) (d3) {${2413}\atop{\alpha_1,\alpha_3}$};
\node at (0,0) (c3) {${2143}\atop{\alpha_1,\alpha_3}$};
\draw[-, thick] (c3) to (d3);
\draw[-, thick] (d3) to (e4);
\draw[-, thick] (d3) to (e2);
\draw[-, thick] (e2) to (f2);
\draw[-, thick] (e4) to (f2);
\end{scope}

\begin{scope}[xshift=8cm, yshift=6cm]
\node at (0,6) (f3) {${3421}\atop{\alpha_1,\alpha_2}$};
\node at (0,3) (e5) {${3241}\atop{\alpha_1,\alpha_2}$};
\node at (0,0) (d5) {${3214}\atop{\alpha_1,\alpha_2}$};
\draw[-, thick] (d5) to (e5);
\draw[-, thick] (e5) to (f3);
\end{scope}

\begin{scope}[xshift=-6cm, yshift=-6cm]
\node at (0,0) (a) {${1234}\atop{\emptyset}$};
\end{scope}

\begin{scope}[xshift=-2cm, yshift=-6cm]
\node at (0,6) (d6) {${2341}\atop{\alpha_1}$};
\node at (0,3) (c5) {${2314}\atop{\alpha_1}$};
\node at (0,0) (b3) {${2134}\atop{\alpha_1}$};
\draw[-, thick] (b3) to (c5);
\draw[-, thick] (c5) to (d6);
\end{scope}

\begin{scope}[xshift=3cm, yshift=-6cm]
\node at (0,9) (e3) {${3412}\atop{\alpha_2}$};
\node at (0,6) (d4) {${3142}\atop{\alpha_2}$};
\node at (-2,3) (c2) {${1342}\atop{\alpha_2}$};
\node at (2,3) (c4) {${3124}\atop{\alpha_2}$};
\node at (0,0) (b2) {${1324}\atop{\alpha_2}$};
\draw[-, thick] (b2) to (c2);
\draw[-, thick] (b2) to (c4);
\draw[-, thick] (c2) to (d4);
\draw[-, thick] (c2) to (d4);
\draw[-, thick] (c4) to (d4);
\draw[-, thick] (d4) to (e3);
\end{scope}

\begin{scope}[xshift=8cm, yshift=-6cm]
\node at (0,0) (b1) {${1243}\atop{\alpha_3}$};
\node at (0,3) (c1) {${1423}\atop{\alpha_3}$};
\node at (0,6) (d2) {${4123}\atop{\alpha_3}$};
\draw[-, thick] (b1) to (c1);
\draw[-, thick] (c1) to (d2);
\end{scope}
\end{tikzpicture}
\caption{Schubert varieties in $\mathbf{SL}_4/\mathbf{B}_4$ with the same stabilizing Levi subgroups.}\label{F:S4-2}
\end{center}
\end{figure}

Clearly, for the Schubert varieties $X_{4321B} \cong G/B$ and $X_{1234B}\cong pt$ there is nothing to prove. 
For the Schubert varieties $X_{4312B}$, $X_{4231B}$, and $X_{3421}$, the stabilizing parabolic subgroups are given by $P_{\{s_1,s_2\}}$, $P_{\{s_1,s_3\}}$, and $P_{\{s_2,s_3\}}$, respectively. 
The Levi subgroups $L_{\{s_1,s_2\}}$ and $L_{\{s_2,s_3\}}$ act spherically on the whole flag variety $G/B$;
this follows from the first item in Remark~\ref{R:Littelmann's}. 
To show that $X_{4231B}$ is a spherical $L(4231)$-variety, we will argue by using Kempf's lemma~\ref{L:Kempf}.

Let $w$ denote the permutation $4231$, and let $w_0$ denote the longest element in $S_4$, that is, $w_0 = 4321$. 
Then $w_0 s_2 = w$. 
Let $P_2$ denote the minimal parabolic subgroup defined by $P_2 = B \cup B s_{2}B$.
By Lemma~\ref{L:Kempf}, it is easy to verify that 
the canonical projection $p_{B,P_2} : G/B \to G/P_2$ is a $\PP^1$-fibration, 
and the restriction $p_{B,P_2} \vert_{X_{wB}} : X_{wB} \to X_{w_0P_2}=G/P_2$ is a surjective birational morphism.
Evidently, $p_{B,P_2} \vert_{X_{wB}}$ is an $L(w)$-equivariant morphism. 
Therefore, $X_{wB}$ is a spherical $L(w)$-variety if and only if $G/P_2$ is a spherical $L(w)$-variety. 
Since $L(w)$ corresponds to the set of simple roots $I=\{s_{1},s_{3}\}$, 
and since $P_2$ corresponds to the set $J=\{s_{2}\}$, 
by the first paragraph of this subsection, we know that $G/P_2$ is a spherical $L(w)$-variety.
This finishes the proof of our assertion that $X_{4231B}$ is a spherical $L(4231)$-variety.

Next, we will show that $X_{4123B},X_{3412B}$, and $X_{2341B}$ are spherical varieties. 
To this end, we list the sets of Grassmann permutations in $W=S_4$. 
\begin{enumerate}
\item If $J_1:=\{s_2,s_3\}$, then $W^{J_1} = \{ 1234, 2134, 3124, 4123 \}$. 
\item If $J_2:=\{s_1,s_3\}$, then $W^{J_2} = \{ 1234, 1324, 1423, 2314, 2413, 3412 \}$. 
\item If $J_3:=\{s_1,s_2\}$, then $W^{J_3} = \{ 1234, 1243, 1342, 2341 \}$. 
\end{enumerate}
By Remark~\ref{R:simple1} and our discussion in the first paragraph of this subsection, 
we know that every Schubert variety of the form $X_{wB}$,
where $w\in W^{J_1}\cup W^{J_2} \cup W^{J_3}$ is a spherical $L(w)$-variety. 
In particular, $X_{4123B},X_{3412B}$, and $X_{2341B}$ are spherical varieties. 

This finishes the proof of our theorem.
\end{proof}

Some of the ideas that we used in the proof of Theorem~\ref{T:allspherical} for $\mathbf{SL}_4$ 
can be used for $\mathbf{SL}_n$ $(n\geq 5)$ as well as for some other connected semisimple algebraic groups.

\begin{Proposition}\label{P:6.4}
Let $G$ denote $\mathbf{SL}_n$ and let $B$ be the Borel subgroup of upper triangular matrices. 
Then, for every $n$, the Schubert variety $X_{w_0 s_iB}$, where $i\in \{1,n-1\}$, is a spherical $L(w_0s_i)$-variety.  
If $n\geq 5$, and $i\in \{2,\dots, n-2\}$, then the Schubert variety $X_{w_0 s_{i}B}$ is not a spherical $L(w_0 s_i)$-variety. 
\end{Proposition}

\begin{proof}
Let $P_i$ denote the minimal parabolic subgroup $P_i:= B\cup Bs_i B$. 
Let $w$ denote $w_0 s_i$. Clearly, $w_0 s_{i} < w_0$. 
Thus, as before, by Kempf's lemma, we know that the canonical projection $p_{B,P_i} : G/B \to G/P_i$ is a $\PP^1$-fibration, 
and the restriction $p_{B,P_i} \vert_{X_{wB}} : X_{wB} \to X_{w_0P_i}=G/P_i$ is an $L(w)$-equivariant surjective birational morphism.
Therefore, $X_{wB}$ is a spherical $L(w)$-variety if and only if $G/P_i$ is a spherical $L(w)$-variety. 
It is easily seen from Proposition~\ref{P:simpleresult} that $L(w)$ is generated by the maximal torus $T$ and the 
simple generators $s_{j}$, where $j\in \{1,\dots, n-1\}$ and $j\neq i$. 
In other words, if $P_I$ denotes the parabolic subgroup $P(w)$, then $|I^c| = 1$.  
If we assume that $n\geq 5$, then $P_J = P_i$ implies that $|J|=1$, hence that, $|J^c | \geq 3$. 
Now it follows from Remark~\ref{R:Littelmann's} that the only two cases, where $L(w)$ can act spherically on $G/P_J$ are given 
by $I^c = \{ s_1\}$ and $I^c = \{s_{n-1}\}$.

\end{proof}

\begin{Remark}
The {\em Lakshmibai-Sandhya criterion for smoothness} states that a Schubert variety $X_{wB}$ , where $w = w_1 w_2 \ldots w_n \in S_n$ , is smooth if and only if $w$ avoids the patterns $4231$ and $3412$. Via this criterion we see that the smooth Schubert divisors from Proposition~\ref{P:6.4} are precisely the ones that are spherical. In light of this it might seem natural to expect that smooth Schubert varieties are always spherical, especially since this is the case for smooth Schubert varieties in the Grassmannian. However, the example below shows that smoothness is not sufficient to gaurantee sphericity for Schubert varieties in the full flag variety.
\end{Remark}

\begin{Example}
\label{E:smoothspericalcounterexample}
Consider the smooth Schubert variety $X_{24687531B}$ in $\mathbf{GL}_8/B$,  
where $B$ is the standard Borel subgroup of upper triangular matrices in $\mathbf{GL}_8$.
The maximal Levi which acts on $X_{24687531B}$ is 
$L(24687531) \cong \mathbf{GL}_2 \times \mathbf{GL}_2 \times \mathbf{GL}_2 \times \mathbf{GL}_2$. 
The dimension of $X_{24687531B}$ is 16, while the dimension of any Borel subgroup in $L(24687531)$ is 12. 
Thus it is not possible for a Borel subgroup in $L(24687531)$ to have a dense orbit in $X_{24687531B}$, 
and hence $X_{24687531B}$ is not $L(24687531)$-spherical.
\end{Example}

\section{Spherical Schubert Varieties in $\text{B}_2$}\label{S:FinalB2}

In this section we will show that every Schubert variety in the flag variety of type $\text{B}_2$ 
is spherical with respect to the corresponding stabilizing Levi subgroup. 
To this end, we let $G$ denote $\mathbf{Spin}_5$, and let $B$ be a Borel subgroup in $G$. 
Let $s_1$ denote the simple reflection corresponding to the short root, and let $s_2$ denote the 
simple reflection corresponding to the long root. 
The minimal parabolic subgroups corresponding to $s_i$ ($i\in \{1,2\}$) will be denoted by $P_i$. 

\begin{Theorem}
Let $G$ denote $\mathbf{Spin}_5$, and let $B$ denote the Borel subgroup in $G$. 
Then, every Schubert variety $X_{wB}$ in $G/B$ is a spherical $L(w)$-variety. 
\end{Theorem}

\begin{proof}
The weak order diagram of the Weyl group $W$ of $(G,B)$ is depicted (by the solid lines) in the middle figure of Figure~\ref{F:B2}.

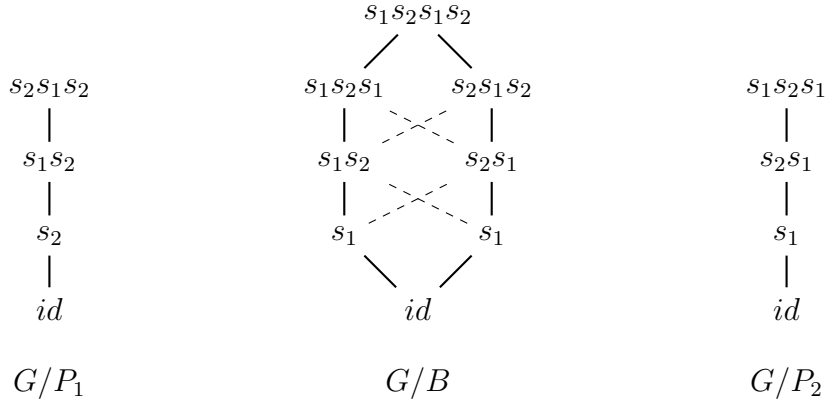
\begin{figure}[htp]
\begin{center}
\begin{tikzpicture}[scale=.49pt]

\begin{scope}[xshift=0cm]
\node at (0,-2) (a) {$G/B$};
\node at (0,0) (a1) {$id$};
\node at (-2,2) (a2) {$s_1$};
\node at (2,2) (a3) {$s_1$};
\node at (-2,4) (a4) {$s_1s_2$};
\node at (2,4) (a5) {$s_2s_1$};
\node at (-2,6) (a6) {$s_1s_2s_1$};
\node at (2,6) (a7) {$s_2s_1s_2$};
\node at (0,8) (a8) {$s_1s_2s_1s_2$};

\draw[dashed] (a2) to (a5);
\draw[dashed] (a3) to (a4);
\draw[dashed] (a4) to (a7);
\draw[dashed] (a5) to (a6);

\draw[-, thick] (a1) to (a2);
\draw[-, thick] (a1) to (a3);
\draw[-, thick] (a2) to (a4);
\draw[-, thick] (a4) to (a6);
\draw[-,thick] (a7) to (a8);
\draw[-, thick] (a6) to (a8);

\draw[-, thick] (a3) to (a5);
\draw[-, thick] (a5) to (a7);
\end{scope}

\begin{scope}[xshift=-10cm]

\node at (0,-2) (a) {$G/P_1$};

\node at (0,0) (a1) {$id$};
\node at (0,2) (a2) {$s_2$};
\node at (0,4) (a4) {$s_1s_2$};
\node at (0,6) (a6) {$s_2s_1s_2$};

\draw[-, thick] (a1) to (a2);
\draw[-, thick] (a2) to (a4);
\draw[-, thick] (a4) to (a6);

\end{scope}

\begin{scope}[xshift=10cm]

\node at (0,-2) (a) {$G/P_2$};

\node at (0,0) (a1) {$id$};
\node at (0,2) (a2) {$s_1$};
\node at (0,4) (a4) {$s_2s_1$};
\node at (0,6) (a6) {$s_1s_2s_1$};

\draw[-, thick] (a1) to (a2);
\draw[-, thick] (a2) to (a4);
\draw[-, thick] (a4) to (a6);

\end{scope}
\end{tikzpicture}
\caption{The right weak order on the Weyl group of $\text{B}_2$ is indicated by the solid lines in 
the middle figure.}\label{F:B2}
\end{center}
\end{figure}

Let $w$ denote $s_1s_2s_1$, hence, $\dim X_{wB}=3$.
By Lemma~\ref{L:Kempf}, it is easy to verify that 
the canonical projection $p_{B,P_2} : G/B \to G/P_2$ is a $\PP^1$-fibration, 
and the restriction $p_{B,P_2} \vert_{X_{wB}} : X_{wB} \to X_{w_0P_2}=G/P_2$ is a surjective birational morphism.
Evidently, $p_{B,P_2} \vert_{X_{wB}}$ is an $L(w)$-equivariant morphism. 
Therefore, $X_{wB}$ is a spherical $L(w)$-variety if and only if $G/P_2$ is a spherical $L(w)$-variety. 
Since $L(w)$ corresponds to the set of simple roots $I=\{s_1\}$, 
and since $P_2$ corresponds to the set $J=\{s_2\}$, 
by Remark~\ref{R:Littelmann's}, we know that $G/P_2$ is a spherical $L(w)$-variety. 
This argument shows that $X_{wB}$ is a spherical $L(w)$-variety. 
Notice that $L(w) = L(s_1) = L(v)$ for every $v\leq_R w$ with $v\neq id$. 
Therefore, every Schubert variety $X_{vB}$ with $v\neq id$ such that $X_{vB}\subseteq X_{wB}$ is a spherical $L(v)$-variety. 

Now let $w$ denote $s_2s_1s_2$. By arguing exactly as in the case of $s_1s_2s_1$, we see that $X_{wB}$ is a 
spherical $L(s_2)$-variety, hence the same statement holds true for every $X_{vB}$ with $1 \leq_R v\leq_R w$. 
This finishes the proof of our assertion. 
\end{proof}

\begin{Remark}
The only nonsmooth spherical variety in $G/B$, where $G=\mathbf{Spin}_5$, is $X_{s_2s_1s_2B}$. 
\end{Remark}

\section{Toric and Spherical Schubert Varieties in $\text{G}_2$}\label{S:FinalG2}

In this section we will show that half of the Schubert varieties in the flag variety of $\text{G}_2$ 
are spherical with respect to appropriate reductive subgroups. 
The following general criterion for sphericality will be useful.

\begin{Theorem}\label{T:zandw}
Let $X_{wB}$ be a Schubert variety,
and let $X_{zB}$ be a Schubert divisor in $X_{wB}$ such that 
\begin{enumerate}
\item $w= z s_\alpha$, where the simple reflection $s_\alpha$ is a generator for $L(w)$, and 
\item $X_{zB}$ is a spherical $L$-variety, where $L$ is a reductive subgroup of $L(w)$ such that 
\[
\mathbf{SL}_\alpha \ L = L\ \mathbf{SL}_\alpha,
\] 
where $\alpha$ is as in part 1.
\end{enumerate}
Then $X_{wB}$ is a spherical $L(w)$-variety. 
\end{Theorem}

\begin{proof}
Let us denote by $B_\alpha$ a Borel subgroup of $\mathbf{SL}_\alpha$, which is a subgroup of $L(w)$.
Let $Q$ denote the minimal parabolic subgroup in $G$ corresponding to the simple root $\alpha$.
By Kempf's lemma~\ref{L:Kempf},  
$X_{wB}$ is a $\PP^1$-bundle over $p_{B,Q}(X_{wB})= X_{wQ}$, and the image of $X_{zB}$ is birational onto $X_{wQ}$. 
To ease our notation, let us denote the canonical projection $p_{B,Q} \vert_{X_{wB}} : X_{wB}\to X_{wQ}$ by $p$,
and let $x_0$ denote the image of $idB$ in $p(X_{wB}) \subset G/Q$. 
Then we have $p^{-1}(x_0)= Q/B \cong \PP^1$ as a Schubert subvariety of $X_{wB}$. 
On one hand, $p^{-1}(x_0)$ is a spherical $\mathbf{SL}_\alpha$-variety, so, we may assume that $B_\alpha$ has an open orbit in $p^{-1}(x_0)$.
On the other hand, since $X_{wQ}$ is a spherical $L$-variety, there exists a Borel subgroup $B_L$ of $L$ 
such that $B_L\cdot x_0$ is open in $X_{wQ}$. 
Let us denote this $B_L$ orbit by $V$. 
By our assumption, the root subgroups $U_{\pm \alpha}$ are not contained in $L$.
Therefore, we can choose a Borel subgroup of $L(w)$ which contains both of $B_\alpha$ and $B_L$.  
Note that $p^{-1}(V)$ is $L$ stable.

Now, since $X_{wB}$ is a $\PP^1$-bundle over $X_{wQ}$, there exists an open neighborhood $V'$ of $x_0=idQ$ in $X_{wQ}$
such that $p^{-1}(V') \cong V'\times Q/B$. 
In particular, we have the isomorphism of quasiprojective varieties $p^{-1}(V'\cap V)$ and $V\cap V' \times Q/B$. 
The latter variety is a subvariety of $V\times Q/B$, on which $B_L\times B_\alpha$ acts with an open orbit. 
Since $L$ and $B_\alpha$ commute, the multiplication map 
\[
V \times Q/B \longrightarrow X_{wB}
\]
is a $B_L \times B_\alpha$ equivariant, birational morphism. 
But this implies that the Borel subgroup of $L(w)$ has an open orbit in $X_{wB}$. 
This finishes the proof of our assertion. 
\end{proof}

Before we continue with the Schubert varieties for $\text{G}_2$, we will mention another general result that is due to 
Karuppuchamy.
\begin{Lemma}[Theorem 2~\cite{Karuppuchamy}]~\label{L:Karu}
Assume that $G$ is an almost simple, simply connected complex linear algebraic group. 
If $w\in W$ is a product of distinct simple reflections, then $X_{wB}$ is a toric variety for a quotient of the maximal torus $T$ of $B$. 
\end{Lemma}
Since every normal toric variety is a spherical variety, as a consequence of Lemma~\ref{L:Karu}, 
we see that if $w$ is a product of distinct simple reflections, then $X_{wB}$ is a spherical $T'$-variety, 
where $T'$ is a quotient of the maximal torus $T$. 
It follows that $X_{wB}$ is a spherical $L(w)$-variety.

Let $G$ denote a complex simple algebraic group of type $\text{G}_2$. 
Let $S= \{s_1,s_2\}$ denote the set of simple reflections, where $s_2$ corresponds to the long root and $s_1$ corresponds to the short root. 
Associated with these two elements, we have two maximal (and minimal) parabolic subgroups
$P_1$ and $P_2$ in $G$. Then $B= P_1\cap P_2$. 
By~\cite[Theorem 2.4]{BilleyPostnikov}, we know that there are only seven smooth Schubert varieties in $G/B$; 
they are indexed by the elements of the set 
\[
\{ id, s_1,s_2, s_1s_2, s_2s_1, s_2s_1s_2, s_1s_2s_1s_2s_1s_2\}. 
\]
The right and the left weak orders on the Weyl group of $\text{G}_2$ is depicted in Figure~\ref{F:G2}.

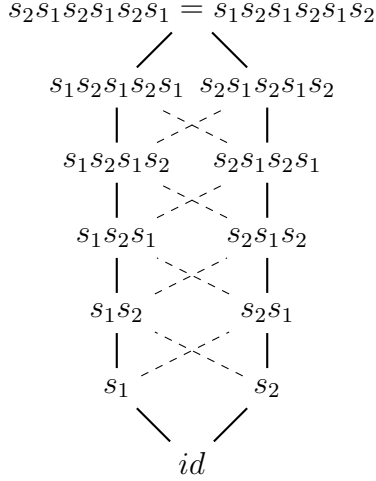
\begin{figure}[htp]
\begin{center}
\begin{tikzpicture}[scale=.5pt]

\begin{scope}[xshift=0cm]
\node at (0,0) (a1) {$id$};
\node at (-2,2) (a2) {$s_1$};
\node at (2,2) (a3) {$s_2$};
\node at (-2,4) (a4) {$s_1s_2$};
\node at (2,4) (a5) {$s_2s_1$};
\node at (-2,6) (a6) {$s_1s_2s_1$};
\node at (2,6) (a7) {$s_2s_1s_2$};
\node at (-2,8) (a8) {$s_1s_2s_1s_2$};
\node at (2,8) (a9) {$s_2s_1s_2s_1$};
\node at (-2,10) (a10) {$s_1s_2s_1s_2s_1$};
\node at (2,10) (a11) {$s_2s_1s_2s_1s_2$};
\node at (0,12) (a12) {$s_2s_1s_2s_1s_2s_1=s_1s_2s_1s_2s_1s_2$};

\draw[dashed] (a2) to (a5);
\draw[dashed] (a3) to (a4);
\draw[dashed] (a4) to (a7);
\draw[dashed] (a5) to (a6);
\draw[dashed] (a6) to (a9);
\draw[dashed] (a7) to (a8);
\draw[dashed] (a8) to (a11);
\draw[dashed] (a9) to (a10);

\draw[-, thick] (a1) to (a2);
\draw[-, thick] (a1) to (a3);
\draw[-, thick] (a2) to (a4);
\draw[-, thick] (a4) to (a6);
\draw[-, thick] (a6) to (a8);
\draw[-, thick] (a8) to (a10);
\draw[-, thick] (a10) to (a12);

\draw[-, thick] (a3) to (a5);
\draw[-, thick] (a5) to (a7);
\draw[-, thick] (a7) to (a9);
\draw[-, thick] (a9) to (a11);
\draw[-, thick] (a11) to (a12);
\end{scope}

\end{tikzpicture}
\caption{The right weak order is indicated by the solid lines, and the left weak order is indicated by the dashed lines.}
\label{F:G2}
\end{center}
\end{figure}

The right weak order (resp. the left weak order) has exactly two maximal chains, 
and along each maximal chain the stabilizer subgroups are constant. 
More precisely, if $w$ and $v$ are two elements from $W$ such that 
\begin{enumerate}
\item $v\lessdot_R w$, 
\item $w\neq w_0$ and $v\neq id$,
\end{enumerate}
then, by Proposition~\ref{P:simpleresult}, $L(w) = L(v)$. 
The Levi subgroup $L(s_1)$ (resp. $L(s_2)$) is the stabilizing Levi for the Schubert varieties 
on the left (resp. the right) maximal chain.

\begin{Theorem}\label{T:G2}
Let $X_{wB}$ be a Schubert variety in $G/B$, where $G$ is of type $\text{G}_2$
Then, $X_{wB}$ is a spherical $L(w)$-variety if and only if 
$
w\in \{ id, s_1,s_2, s_1 s_2, s_2s_1, s_1s_2s_1, s_2s_1s_2, s_2s_1s_2s_2s_1s_2\}.
$
\end{Theorem}

\begin{proof}

By Lemma~\ref{L:Karu}, if $w\in \{id, s_1,s_2, s_1s_2,s_2s_1\}$, then $X_{wB}$ is toric, and hence a spherical $T$-variety.
Clearly, $X_{w_0B} \cong G/B$ is a spherical $L(w_0)$-variety also.
Now will discuss the question of sphericality for the varieties $X_{wB}$ with $w\notin \{id, s_1,s_2, s_1s_2,s_2s_1,w_0\}$.

We notice that both of the Levi subgroups $L(s_1)$ and $L(s_2)$ are isomorphic to $\mathbf{GL}_2$, hence, their Borel subgroups 
are 3 dimensional. It follows that, if $w$ is an element of the set 
\[
\{s_1s_2s_1s_2, s_2s_1s_2s_1, s_1s_2s_1s_2s_1, s_2s_1s_2s_1s_2, s_1s_2s_1s_2s_1s_2\},
\]
then $\dim X_{wB} \geq 4$, hence, $X_{wB}$ is not a spherical $L(s_i)$-variety ($i\in \{1,2\}$).
Thus, to finish our proof, we must show that $X_{wB}$ is a spherical $L(w)$-variety for the elements of 
$\{s_1s_2s_1,s_2s_1s_2\}$. 
For $w=s_1s_2s_1$, we see from Figure~\ref{F:G2} that $X_{s_1s_2s_1B}$ has $X_{s_1s_2B}$ as a divisor,
and $s_1s_2 \leq_R s_1s_2s_1$. Further, as noted above, $X_{s_1s_2B}$ is a spherical $T$-variety, and $T$ commutes with $\mathbf{SL}_{\alpha_1}$.
Therefore, by Theorem~\ref{T:zandw}, $X_{s_1s_2s_1B}$ is a spherical $L(s_1)$-variety. 
Similarly, we see that $X_{s_2s_1s_2B}$ is a spherical $L(s_2)$-variety.
This finishes the proof of our theorem.

\end{proof}

The following statement is a straightforward consequence of Theorem~\ref{T:G2}.

\begin{Corollary}
If $X_{wB}$ is a smooth Schubert variety in $G/B$, where $G=\text{G}_2$, then $X_{wB}$ is a spherical $L(w)$-variety. 
\end{Corollary}

In conclusion, we see that every smooth Schubert variety in a (partial) flag variety of a simple algebraic group of rank 2 is spherical.

\bibliography{References}

\begin{thebibliography}{10}

\bibitem{Avdeev15}
Roman Avdeev.
\newblock Strongly solvable spherical subgroups and their combinatorial
  invariants.
\newblock {\em Selecta Math. (N.S.)}, 21(3):931--993, 2015.

\bibitem{AvdeevPetukhov}
Roman~S. Avdeev and Alexey~V. Petukhov.
\newblock Spherical actions on flag varieties.
\newblock {\em Mat. Sb.}, 205(9):4--48, 2014.

\bibitem{BilleyPostnikov}
Sara Billey and Alexander Postnikov.
\newblock Smoothness of {S}chubert varieties via patterns in root subsystems.
\newblock {\em Adv. in Appl. Math.}, 34(3):447--466, 2005.

\bibitem{Billey}
Sara~C. Billey.
\newblock Pattern avoidance and rational smoothness of {S}chubert varieties.
\newblock {\em Adv. Math.}, 139(1):141--156, 1998.

\bibitem{Borel}
Armand Borel.
\newblock {\em Linear algebraic groups}, volume 126 of {\em Graduate Texts in
  Mathematics}.
\newblock Springer-Verlag, New York, second edition, 1991.

\bibitem{Brion87}
Michel Brion.
\newblock Classification des espaces homog\`enes sph\'eriques.
\newblock {\em Compositio Math.}, 63(2):189--208, 1987.

\bibitem{Can2019}
Mahir~Bilen Can, Reuven Hodges, and Venkatramani Lakshmibai.
\newblock Toroidal schubert varieties.
\newblock {\em Algebras and Representation Theory}, 2019.

\bibitem{Carrell94}
James~B. Carrell.
\newblock On the smooth points of a {S}chubert variety.
\newblock In {\em Representations of groups ({B}anff, {AB}, 1994)}, volume~16
  of {\em CMS Conf. Proc.}, pages 15--33. Amer. Math. Soc., Providence, RI,
  1995.

\bibitem{Carrell11}
James~B. Carrell.
\newblock Smooth {S}chubert varieties in {$G/B$} and {$B$}-submodules of
  {$\mathfrak g/\mathfrak b$}.
\newblock {\em Transform. Groups}, 16(3):673--680, 2011.

\bibitem{Gasharov}
Vesselin Gasharov.
\newblock Factoring the {P}oincar\'{e} polynomials for the {B}ruhat order on
  {$S_n$}.
\newblock {\em J. Combin. Theory Ser. A}, 83(1):159--164, 1998.

\bibitem{HodgesLakshmibaiCL}
Reuven {Hodges} and Venkatramani {Lakshmibai}.
\newblock {A classification of spherical Schubert varieties in the
  Grassmannian}.
\newblock {\em arXiv e-prints}, page arXiv:1809.08003, Sep 2018.

\bibitem{HodgesLakshmibai}
Reuven Hodges and Venkatramani Lakshmibai.
\newblock Levi subgroup actions on {S}chubert varieties, induced decompositions
  of their coordinate rings, and sphericity consequences.
\newblock {\em Algebr. Represent. Theory}, 21(6):1219--1249, 2018.

\bibitem{HohlwegLabbe}
Christophe Hohlweg and Jean-Philippe Labb\'{e}.
\newblock On inversion sets and the weak order in {C}oxeter groups.
\newblock {\em European J. Combin.}, 55:1--19, 2016.

\bibitem{HongMok}
Jaehyun Hong and Ngaiming Mok.
\newblock Characterization of smooth {S}chubert varieties in rational
  homogeneous manifolds of {P}icard number 1.
\newblock {\em J. Algebraic Geom.}, 22(2):333--362, 2013.

\bibitem{Karuppuchamy}
Paramasamy Karuppuchamy.
\newblock On {S}chubert varieties.
\newblock {\em Comm. Algebra}, 41(4):1365--1368, 2013.

\bibitem{Kempf1976}
George~R. Kempf.
\newblock Linear systems on homogeneous spaces.
\newblock {\em Ann. of Math. (2)}, 103(3):557--591, 1976.

\bibitem{Kramer79}
Manfred Kr\"amer.
\newblock Sph\"arische {U}ntergruppen in kompakten zusammenh\"angenden
  {L}iegruppen.
\newblock {\em Compositio Math.}, 38(2):129--153, 1979.

\bibitem{LMS74}
V.~Lakshmibai, C.~Musili, and C.~S. Seshadri.
\newblock Cohomology of line bundles on {$G/B$}.
\newblock {\em Ann. Sci. \'Ecole Norm. Sup. (4)}, 7:89--137, 1974.
\newblock Collection of articles dedicated to Henri Cartan on the occasion of
  his 70th birthday, I.

\bibitem{LakshmibaiSandhya}
V.~Lakshmibai and B.~Sandhya.
\newblock Criterion for smoothness of {S}chubert varieties in {${\rm
  Sl}(n)/B$}.
\newblock {\em Proc. Indian Acad. Sci. Math. Sci.}, 100(1):45--52, 1990.

\bibitem{Littelmann}
Peter Littelmann.
\newblock On spherical double cones.
\newblock {\em J. Algebra}, 166(1):142--157, 1994.

\bibitem{MWZ1}
Peter Magyar, Jerzy Weyman, and Andrei Zelevinsky.
\newblock Multiple flag varieties of finite type.
\newblock {\em Adv. Math.}, 141(1):97--118, 1999.

\bibitem{MWZ2}
Peter Magyar, Jerzy Weyman, and Andrei Zelevinsky.
\newblock Symplectic multiple flag varieties of finite type.
\newblock {\em J. Algebra}, 230(1):245--265, 2000.

\bibitem{Mathieu}
Olivier Mathieu.
\newblock Formules de caract\`eres pour les alg\`ebres de {K}ac-{M}oody
  g\'en\'erales.
\newblock {\em Ast\'erisque}, (159-160):267, 1988.

\bibitem{Mikityuk86}
Ihor~V. Mikityuk.
\newblock Integrability of invariant {H}amiltonian systems with homogeneous
  configuration spaces.
\newblock {\em Mat. Sb. (N.S.)}, 129(171)(4):514--534, 591, 1986.

\bibitem{OhYoo}
Suho Oh and Hwanchul Yoo.
\newblock Bruhat order, rationally smooth {S}chubert varieties, and hyperplane
  arrangements.
\newblock In {\em 22nd {I}nternational {C}onference on {F}ormal {P}ower
  {S}eries and {A}lgebraic {C}ombinatorics ({FPSAC} 2010)}, Discrete Math.
  Theor. Comput. Sci. Proc., AN, pages 965--972. Assoc. Discrete Math. Theor.
  Comput. Sci., Nancy, 2010.

\bibitem{Perrin}
Nicolas Perrin.
\newblock On the geometry of spherical varieties.
\newblock {\em Transform. Groups}, 19(1):171--223, 2014.

\bibitem{RichmondSlofstra}
Edward Richmond and William Slofstra.
\newblock Billey-{P}ostnikov decompositions and the fibre bundle structure of
  {S}chubert varieties.
\newblock {\em Math. Ann.}, 366(1-2):31--55, 2016.

\bibitem{Slofstra15}
William Slofstra.
\newblock Rationally smooth {S}chubert varieties and inversion hyperplane
  arrangements.
\newblock {\em Adv. Math.}, 285:709--736, 2015.

\bibitem{Stembridge}
John.~R. Stembridge.
\newblock Multiplicity-free products and restrictions of {W}eyl characters.
\newblock {\em Represent. Theory}, 7:404--439, 2003.

\end{thebibliography}
\bibliographystyle{plain}

\end{document}